\newcommand{\spn}{\mathop\mathrm{span}}
\newtheorem{thm}{Theorem}[section]
\newtheorem{theorem}[thm]{Theorem}
\newtheorem{corollary}[thm]{Corollary}
\newtheorem{lemma}[thm]{Lemma}
\newtheorem{proposition}[thm]{Proposition}
\newtheorem{definition}[thm]{Definition}
\theoremstyle{remark}
\newtheorem{remark}[thm]{Remark}
\newtheorem{example}[thm]{Example}
\begin{document}

\title{On phase and norm retrieval by subspaces}

\author[Tran, Huynh
]{Tin T. Tran and Phung T. Huynh\\
(In memory of Peter G. Casazza)}

\address{Tran: Department of Mathematics, University of Education, Hue University, Hue, Vietnam.}
\subjclass{42C15}

\address{Huynh: Department of Mathematics, University of Sciences,
	Hue University, Hue, Vietnam.}
\subjclass{42C15}


\email{tranthientin@hueuni.edu.vn}
\email{htphung@hueuni.edu.vn}

\begin{abstract} 
This paper studies phase and norm retrieval by subspaces. We first investigate norm retrieval by hyperplanes. We show that if $N$ hyperplanes $\{\varphi_i^\perp\}_{i=1}^N\subset \mathbb{R}^N$ allow norm retrieval and the vectors $\{\varphi_i\}_{i=1}^N$ are linearly independent, then these vectors must be an orthonormal basis for $\mathbb{R}^N$. We then present several new properties of subspaces that allow phase and norm retrieval. In particular, we provide a complete classification of two proper subspaces that perform norm retrieval. It is known that the collection of norm-retrievable frames $\{\varphi_i\}_{i=1}^M$ in $\mathbb{R}^N$ is not dense in the set of all $M$-element frames when $M < 2N-1$. We extend this result to subspaces. Several alternative proofs of fundamental results in phase and norm retrieval are also provided.

\vspace{0.2 in}

{\it Keywords: phase retrieval, norm retrieval, full spark, finite frames}
\end{abstract}
\maketitle

\section{Introduction and Preliminaries}

Hilbert space frames were introduced by Duffin and Schaeffer \cite{DS} in 1952 in connection with fundamental problems in non-harmonic Fourier series. For several decades, however, their ideas attracted limited attention outside this setting. The situation changed dramatically with the landmark paper of Daubechies et al. \cite{DGM} in 1986, after which the theory of frames gained widespread interest. Since then, frame theory has developed into a powerful tool with extensive applications across both pure and applied mathematics.

\begin{definition}
	Let $\mathcal{H}^N$ be a real or complex Hilbert space of dimension $N$. A sequence of vectors $\{\varphi_i\}_{i=1}^M$ is a frame for $\mathcal{H}^N$ if there exist constants $0<A\leq B<\infty$ such that
	\[A\|x\|^2\leq \sum_{i=1}^{M}|\langle x, \varphi_i\rangle|^2\leq B\|x\|^2, \ \text{ for all } x\in \mathcal{H}^N.\]
\end{definition}
The constants $A$ and $B$ are called the lower and upper frame bounds, respectively. If $A = B$ this is called an $A$-tight frame and if $A = B = 1$, it is referred to as a Parseval frame. Note that in a finite-dimensional Hilbert space, a frame is simply a spanning set for the space. For an introduction to frame theory, we refer the reader to \cite{CK, C}.

Signal reconstruction is a fundamental problem in engineering with a wide range of applications. One of the major challenges arises when signals must be recovered despite partial loss of information. In particular, the loss of phase information occurs in areas such as speech recognition \cite{BR, PDH, RJ} and optical applications such as X-ray crystallography and electron microscopy \cite{BM, F, F1}, making efficient phase retrieval essential. The notion of phase retrieval in the context of Hilbert space frames was first introduced in 2006 by Balan, Casazza, and Edidin \cite{BCE}, and since then it has developed into an active area of research; see, for example, \cite{A, BCC, BT, CCPW, E, H}, among others.

Phase retrieval has been studied for both vectors and projections and, in general, refers to the recovery of a signal’s phase from intensity measurements obtained via a redundant linear system. Another closely related area of research is norm retrieval, first introduced by Bahmanpour et al. \cite{BCC}, which concerns determining the norm of a vector from the absolute values of its intensity measurements. This notion arises naturally in the context of phase retrieval, particularly when considering collections of subspaces together with their orthogonal complements.

\begin{definition}
	A family of vectors $\{\varphi_i\}_{i=1}^M$ in $\mathcal{H}^N$ is said to do \emph{phase retrieval} (respectively, \emph{norm retrieval}) if for any $x, y \in \mathcal{H}^N$, the condition 
	\[
	|\langle x, \varphi_i \rangle| = |\langle y, \varphi_i \rangle| \quad \text{for all } i \in [M]
	\]
	implies that $x = cy$ for some scalar $c$ with $|c| = 1$ (respectively, $\|x\| = \|y\|$).
\end{definition}
In this definition and throughout this paper, for any $M \in \mathbb{N}$, we denote by $[M]$ the set $\{1, 2, \ldots, M\}$.

It is immediate that phase retrieval implies norm retrieval. Every orthonormal basis performs norm retrieval but fails to achieve phase retrieval. More generally, any set of vectors containing a tight frame guarantees norm retrieval.

In the real case, the complement property provides a fundamental characterization of phase retrieval, as established in \cite{BCE}.
\begin{definition}
	A family of vectors $\{\varphi_i\}_{i=1}^M \subset \mathbb{R}^N$ is said to have the \emph{complement property} if for every index set $I \subset [M]$, either 
	\[
	\spn\{\varphi_i : i \in I\} = \mathbb{R}^N 
	\quad \text{or} \quad 
	\spn\{\varphi_i : i \in I^c\} = \mathbb{R}^N.
	\]
\end{definition}
\begin{theorem}[\cite{BCE}]\label{com}
	A family of vectors $\{\varphi_i\}_{i=1}^M$ in $\mathbb{R}^N$ does phase retrieval if and only if it has the complement property.
\end{theorem}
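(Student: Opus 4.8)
The plan is to prove the two implications of the biconditional separately, in each case exploiting the correspondence between the sign ambiguities inherent in phase retrieval and a partition of the index set $[M]$ into two complementary pieces.

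For the direction that the complement property implies phase retrieval, I would start from two vectors $x,y \in \mathbb{R}^N$ satisfying $|\langle x,\varphi_i\rangle| = |\langle y,\varphi_i\rangle|$ for every $i \in [M]$. The key observation is that equality of absolute values forces $\langle x,\varphi_i\rangle = \pm\langle y,\varphi_i\rangle$ for each index, so I would split $[M]$ into the set $I$ on which the sign is $+$ and its complement $I^c$ on which it is $-$ (assigning any index where both inner products vanish to $I$, so that $I$ and $I^c$ genuinely partition $[M]$). Then $x-y$ is orthogonal to every $\varphi_i$ with $i \in I$, while $x+y$ is orthogonal to every $\varphi_i$ with $i \in I^c$. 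Invoking the complement property for this particular $I$, one of the two families spans $\mathbb{R}^N$; whichever one does forces the corresponding vector, $x-y$ or $x+y$, to vanish, giving $x = \pm y$ and hence $x = cy$ with $|c| = 1$.

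For the converse I would argue by contraposition: assuming the complement property fails, I would exhibit two vectors that defeat phase retrieval. If some $I \subset [M]$ witnesses the failure, then neither $\spn\{\varphi_i : i \in I\}$ nor $\spn\{\varphi_i : i \in I^c\}$ is all of $\mathbb{R}^N$, so I can choose nonzero $u \perp \{\varphi_i : i \in I\}$ and nonzero $v \perp \{\varphi_i : i \in I^c\}$. Setting $x = u+v$ and $y = u-v$, a short computation shows $\langle x,\varphi_i\rangle = \langle y,\varphi_i\rangle$ for $i \in I^c$ and $\langle x,\varphi_i\rangle = -\langle y,\varphi_i\rangle$ for $i \in I$, so all the absolute values agree. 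The point of taking $u,v$ both nonzero is precisely that $x \neq \pm y$, since $x = y$ would force $v = 0$ and $x = -y$ would force $u = 0$; thus these vectors violate phase retrieval.

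I do not expect a deep obstacle here; the arguments are short and the difficulty is bookkeeping rather than insight. The main thing to handle carefully is the treatment of indices where the measurements vanish, so that $I$ and $I^c$ form an honest partition to which the complement property applies, and in the converse the verification that the nonzero choices of $u$ and $v$ really do prevent $x$ and $y$ from differing by a unimodular scalar. It is worth noting that over $\mathbb{R}$ the unimodular scalars are just $\pm 1$, which is exactly what lets the sign-partition argument close up cleanly.
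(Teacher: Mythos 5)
Your proof is correct and is essentially the classical argument of Balan--Casazza--Edidin: the sign-partition of $[M]$ giving $x-y\perp\{\varphi_i\}_{i\in I}$ and $x+y\perp\{\varphi_i\}_{i\in I^c}$ in one direction, and the construction $x=u+v$, $y=u-v$ from nonspanning complementary pieces in the other. The paper itself states this theorem with a citation to \cite{BCE} and gives no proof, so there is nothing further to compare; your handling of the vanishing measurements and of the nondegeneracy $u,v\neq 0$ (which rules out $x=\pm y$, the only unimodular real scalars) is exactly what is needed.
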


It follows that if a set of vectors $\{\varphi_i\}_{i=1}^M$ does phase retrieval in $\mathbb{R}^N$, then $M\geq 2N-1$. 
Note that in the complex case, the complement property is a necessary (but not sufficient) condition for phase retrieval, see \cite{BCE}.

Full spark is another important notion of vectors in frame theory. 
\begin{definition}
	Given a family of vectors $\Phi=\{\varphi_i\}_{i=1}^M$ in $\mathcal{H}^N$, the spark of $\Phi$
	is defined as the cardinality of the smallest linearly dependent subset of $\Phi$. When spark$(\Phi) = N + 1$, every subset of size $N$ is linearly independent, and in that case,
	$\Phi$ is said to be full spark.
\end{definition}
From the definitions, it follows that full spark frames with $M \geq
2N - 1$ vectors have the complement property and hence do phase retrieval. 
Also, if $M = 2N - 1$ then the complement property clearly implies full spark.

In some cases, such as crystal twinning in X-ray crystallography \cite{D}, signals must be reconstructed from the norms of higher-dimensional components. This led to the development of phase retrieval by projections.

\begin{definition}
	Let $\{W_i\}_{i=1}^M$ be a family of subspaces of $\mathcal{H}^N$ with corresponding orthogonal projections $\{P_i\}_{i=1}^M$. We say that $\{W_i\}_{i=1}^M$ (or $\{P_i\}_{i=1}^M$) yields \emph{phase retrieval} (respectively, \emph{norm retrieval}) if for any $x, y \in \mathcal{H}^N$ satisfying
	\[
	\|P_i x\| = \|P_i y\| \quad \text{for all } i \in [M],
	\]
	then $x = c y$ for some scalar $c$ with $|c| = 1$ (respectively, $\|x\| = \|y\|$).
\end{definition}

The following is a fundamental result in phase retrieval by projections.

\begin{theorem}[Edidin, \cite{E}]
	A family of orthogonal projections $\{P_i\}_{i=1}^M$ does phase retrieval in $\mathbb{R}^N$ if and only if for every non-zero vector $x\in \mathbb{R}^N$, $\spn\{P_ix\}_{i=1}^M=\mathbb{R}^N$. 
\end{theorem}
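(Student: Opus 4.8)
The plan is to convert the phase-retrieval condition, which is quadratic separately in $x$ and in $y$, into a single bilinear condition to which the spanning hypothesis applies directly. The starting point is that each $P_i$ is an orthogonal projection, hence self-adjoint and idempotent, so $\|P_i x\|^2 = \langle P_i x, P_i x\rangle = \langle P_i^* P_i x, x\rangle = \langle P_i x, x\rangle$. Consequently the measurement equality $\|P_i x\| = \|P_i y\|$ is equivalent to $\langle P_i x, x\rangle - \langle P_i y, y\rangle = 0$.

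First I would introduce the change of variables $u = x + y$ and $v = x - y$, which is a bijection of $\RR^N \times \RR^N$ onto itself. Expanding $\langle P_i u, v\rangle$ and using self-adjointness of $P_i$ (so that the cross terms $\langle P_i x, y\rangle$ and $\langle P_i y, x\rangle$ coincide and cancel) yields the key identity $\langle P_i u, v\rangle = \langle P_i x, x\rangle - \langle P_i y, y\rangle$. Hence $\|P_i x\| = \|P_i y\|$ for all $i$ if and only if $\langle P_i u, v\rangle = 0$ for all $i$. Because the substitution is a bijection, the phase-retrieval conclusion $x = \pm y$ translates exactly into the alternative $u = 0$ or $v = 0$ (as $v = 0 \iff x = y$ and $u = 0 \iff x = -y$). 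Thus $\{P_i\}_{i=1}^M$ does phase retrieval if and only if, for all $u, v \in \RR^N$, the vanishing $\langle P_i u, v\rangle = 0$ for every $i$ forces $u = 0$ or $v = 0$.

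The final step is to recognize $\langle P_i u, v\rangle = 0$ for all $i$ as the orthogonality of $v$ to $\spn\{P_i u\}_{i=1}^M$. For a fixed nonzero $u$, a nonzero $v$ orthogonal to every $P_i u$ exists precisely when $\spn\{P_i u\}_{i=1}^M$ is a proper subspace of $\RR^N$. So phase retrieval fails if and only if there is some nonzero $u$ with $\spn\{P_i u\}_{i=1}^M \neq \RR^N$, which is the contrapositive of the stated criterion. The whole argument is a chain of equivalences rather than a construction, so there is no serious obstacle; the one step I would be most careful about is verifying the bilinear identity and confirming that the substitution $(x,y) \mapsto (x+y, x-y)$ matches the degenerate cases $u = 0$ and $v = 0$ to the correct phase-retrieval alternatives $x = -y$ and $x = y$.
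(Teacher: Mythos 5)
Your argument is correct and complete: the polarization identity $\|P_ix\|^2-\|P_iy\|^2=\langle P_i(x+y),\,x-y\rangle$ (valid since each $P_i$ is self-adjoint and idempotent), combined with the observation that a nonzero $v$ orthogonal to $\spn\{P_iu\}_{i=1}^M$ exists exactly when that span is a proper subspace, establishes the equivalence, and the matching of the degenerate cases $u=0$, $v=0$ with $x=-y$, $x=y$ is handled properly. The paper itself states this theorem without proof, citing Edidin \cite{E}, but your substitution $u=x+y$, $v=x-y$ is the standard argument and is precisely the device the paper deploys in its proof of the norm-retrieval analogue, Theorem \ref{thm_PD}, so your route is essentially the same as the methods used in the paper.
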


Given subspaces $\{W_i\}_{i=1}^M$ of $\mathcal{H}^N$ which yield phase retrieval, it is not necessarily the case that the family of orthogonal complements $\{W_i^\perp\}_{i=1}^M$ also yields phase retrieval. The following result shows that norm retrieval is precisely the condition required for phase retrieval to pass to orthogonal complements.
\begin{theorem}[\cite{BCC}]\label{passorth}
	Let $\{W_i\}_{i=1}^M$ be subspaces of $\mathcal{H}^N$ which do phase retrieval. Then $\{W_i^\perp\}_{i=1}^M$ does phase retrieval if and only if it does norm retrieval.
\end{theorem}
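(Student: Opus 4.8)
The plan is to prove the two implications separately. One direction is immediate: phase retrieval always implies norm retrieval, as noted just after the definition of these notions, so if $\{W_i^\perp\}_{i=1}^M$ does phase retrieval then it certainly does norm retrieval. The substance of the theorem lies in the reverse implication, and the key tool I would use is the Pythagorean relation between a projection and its complementary projection.

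Let $P_i$ denote the orthogonal projection onto $W_i$, so that $I - P_i$ is the orthogonal projection onto $W_i^\perp$. For every $x \in \mathcal{H}^N$ and every $i \in [M]$ we have the identity
\[
\|x\|^2 = \|P_i x\|^2 + \|(I-P_i)x\|^2 .
\]
I would assume that $\{W_i^\perp\}_{i=1}^M$ does norm retrieval and take arbitrary $x, y \in \mathcal{H}^N$ satisfying $\|(I-P_i)x\| = \|(I-P_i)y\|$ for all $i \in [M]$; the goal is to conclude $x = cy$ for some scalar $c$ with $|c| = 1$.

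First, norm retrieval of $\{W_i^\perp\}_{i=1}^M$ applied to this hypothesis yields $\|x\| = \|y\|$. Substituting this together with $\|(I-P_i)x\| = \|(I-P_i)y\|$ into the Pythagorean identity above gives
\[
\|P_i x\|^2 = \|x\|^2 - \|(I-P_i)x\|^2 = \|y\|^2 - \|(I-P_i)y\|^2 = \|P_i y\|^2
\]
for every $i$, hence $\|P_i x\| = \|P_i y\|$ for all $i \in [M]$. Since $\{W_i\}_{i=1}^M$ does phase retrieval by hypothesis, this forces $x = cy$ with $|c| = 1$, which completes the argument.

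There is no serious obstacle here once the Pythagorean identity is in view: the entire proof hinges on the observation that the norms of $P_i x$ and $(I-P_i)x$ are linked through $\|x\|^2$, so that equality of complementary-projection norms combined with equality of total norms transfers to equality of the original projection norms. The only point requiring care is the logical flow—norm retrieval of the complements is exactly what supplies the missing piece $\|x\| = \|y\|$ needed to run the Pythagorean computation, which explains why this condition (and not a weaker one) is precisely what is required for phase retrieval to pass to orthogonal complements. Since the identity holds in both the real and complex settings, the same argument covers the general $\mathcal{H}^N$ stated in the theorem.
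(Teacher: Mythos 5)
Your proof is correct. Note that the paper states this theorem without proof, citing \cite{BCC}; your argument---using the Pythagorean identity $\|x\|^2 = \|P_i x\|^2 + \|(I-P_i)x\|^2$ to transfer equality of complementary projection norms, with norm retrieval supplying the missing ingredient $\|x\| = \|y\|$---is exactly the standard argument from that reference, valid in both the real and complex settings as you observe.
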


In this paper, we continue the study of phase and norm retrieval by subspaces. The remainder of the paper is organized as follows. In Section 2, we investigate norm retrieval by hyperplanes. In Section 3, we present several new properties of subspaces that perform phase and norm retrieval. It is known that the collection of norm-retrievable frames $\{\varphi_i\}_{i=1}^M$ in $\mathbb{R}^N$ is not dense in the set of all $M$-element frames when $M < 2N-1$. We extend this result to the setting of subspaces. Several alternative proofs of fundamental results in phase and norm retrieval are also provided in this section.

\section{Properties of hyperplanes doing norm retrieval}

In this section, we are interested in sets of vectors whose orthogonal complements perform norm retrieval. It is clear that if a set contains an orthonormal basis, then both the set and its orthogonal complements do norm retrieval. The following theorem appears in \cite{Ca}.

\begin{theorem}[\cite{Ca}] \label{thm_Ca1}
	If the set of vectors $\{\varphi_i\}_{i=1}^{N-1}\subset\mathbb{R}^N$ is linearly independent, then its orthogonal complement $\{\varphi_i^\perp\}_{i=1}^{N-1}$ cannot do norm retrieval.
\end{theorem}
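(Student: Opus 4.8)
The plan is to produce an explicit counterexample: a pair $x,y \in \mathbb{R}^N$ with $\|x\| \neq \|y\|$ yet $\|P_i x\| = \|P_i y\|$ for every $i$, where $P_i$ denotes the orthogonal projection onto the hyperplane $\varphi_i^\perp$. Since each hyperplane depends only on the line spanned by $\varphi_i$, I first normalize and assume $\|\varphi_i\| = 1$, so that $P_i = I - \varphi_i\varphi_i^T$ and $\|P_i z\|^2 = \|z\|^2 - \langle z, \varphi_i\rangle^2$ for every $z$. In this notation the failure of norm retrieval amounts to finding $x,y$ with $\langle x,\varphi_i\rangle^2 - \langle y,\varphi_i\rangle^2 = \|x\|^2 - \|y\|^2$ for all $i$ and with this common value nonzero.

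The key construction exploits the one extra dimension. Let $W = \spn\{\varphi_i\}_{i=1}^{N-1}$, which has dimension $N-1$ by linear independence, and let $v$ be a unit vector spanning the line $W^\perp$. Because $\{\varphi_i\}_{i=1}^{N-1}$ is a basis of $W$, its Gram matrix is invertible, so there is a unique $p \in W$ satisfying $\langle p, \varphi_i\rangle = 1$ for every $i \in [N-1]$; in particular $p \neq 0$. I then set $x = p + v$ and $y = \|p\|\, v$.

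With these choices $\langle x, \varphi_i\rangle = \langle p, \varphi_i\rangle = 1$ and $\langle y, \varphi_i\rangle = 0$ for all $i$ (since $v \perp \varphi_i$), while $\|x\|^2 = \|p\|^2 + 1$ and $\|y\|^2 = \|p\|^2$. A direct computation then gives $\|P_i x\|^2 = \|x\|^2 - 1 = \|p\|^2$ and $\|P_i y\|^2 = \|y\|^2 = \|p\|^2$, so that $\|P_i x\| = \|P_i y\|$ for every $i$; yet $\|x\|^2 - \|y\|^2 = 1 \neq 0$. Hence $\{\varphi_i^\perp\}_{i=1}^{N-1}$ cannot do norm retrieval.

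I do not expect a serious obstacle: once the identity $\|P_i z\|^2 = \|z\|^2 - \langle z, \varphi_i\rangle^2$ reframes the problem, the only real decision is the form of the counterexample. The guiding idea is to place $y$ entirely in the normal direction $v$, annihilating all the inner products $\langle y, \varphi_i\rangle$, and to choose $x$ so that its inner products $\langle x, \varphi_i\rangle$ are constant across $i$; linear independence is precisely what guarantees the existence of the required vector $p$. The only care needed is in tuning the $v$-components so that the projection norms coincide while the total norms differ, which is exactly what the choice $x=p+v$, $y = \|p\|v$ achieves.
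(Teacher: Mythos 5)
Your proof is correct and complete. Each step checks out: normalizing the $\varphi_i$ is harmless since $\varphi_i^\perp$ depends only on $\spn\{\varphi_i\}$; invertibility of the Gram matrix of a linearly independent set gives the unique $p\in W$ with $\langle p,\varphi_i\rangle=1$ for all $i$; and with $x=p+v$, $y=\|p\|\,v$ one indeed gets $\|P_ix\|^2=(\|p\|^2+1)-1=\|p\|^2=\|P_iy\|^2$ for every $i$, while $\|x\|^2-\|y\|^2=1\neq 0$. Note that the paper itself does not prove Theorem \ref{thm_Ca1}; it quotes the result from \cite{Ca}, so there is no internal proof to compare against, and your explicit two-vector counterexample stands on its own. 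It is worth observing that your construction is essentially the mechanism the authors exploit elsewhere: the vector $p$ with constant inner products is exactly the solution $x(\lambda)$ for $\lambda=(1,\dots,1)$ appearing in Lemma \ref{ortho} and Theorem \ref{thm_M}, where the identity $\|P_ix\|^2=\|x\|^2-1$ is also the starting point. An alternative route within the paper's own toolkit would be Theorem \ref{thm_PD}: norm retrieval requires $x\in\spn\{P_ix\}_{i=1}^{N-1}$ for every $x$, and for your $x=p+v$ one has $P_ix=x-\varphi_i$, whence $x=\sum_i c_i(x-\varphi_i)$ forces either $\sum_i c_i=1$ with $\sum_i c_i\varphi_i=0$ (impossible by independence) or $x\in W$ (impossible since $\langle x,v\rangle=1$); so norm retrieval fails. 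Your direct counterexample is the more elementary of the two, since it needs nothing beyond the Pythagorean identity $\|P_iz\|^2=\|z\|^2-\langle z,\varphi_i\rangle^2$, whereas the Theorem \ref{thm_PD} route buys a statement that generalizes more readily to projections of higher corank.
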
 
The following is a direct consequence of Theorem \ref{thm_Ca1}.
\begin{corollary}
Let $\{\varphi_i\}_{i=1}^M\subset \mathbb{R}^N$ be a set of non-zero vectors. If the set of hyperplanes $\{\varphi_i^\perp\}_{i=1}^M$ does norm retrieval, then either $\{\varphi_i\}_{i=1}^M$ spans $\mathbb{R}^N$ or is linearly dependent.
\end{corollary}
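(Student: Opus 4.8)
The plan is to argue by contradiction, reducing the general statement to the borderline case $M = N-1$ already handled by Theorem \ref{thm_Ca1}. Assume that $\{\varphi_i^\perp\}_{i=1}^M$ does norm retrieval, and suppose, toward a contradiction, that the negation of the conclusion holds: the family $\{\varphi_i\}_{i=1}^M$ neither spans $\mathbb{R}^N$ nor is linearly dependent. Then $\{\varphi_i\}_{i=1}^M$ is a linearly independent set whose span is a proper subspace of $\mathbb{R}^N$, which forces $M = \dim \spn\{\varphi_i\}_{i=1}^M < N$, i.e. $M \leq N-1$. The goal is then to manufacture a linearly independent family of exactly $N-1$ vectors whose hyperplanes still do norm retrieval, so that Theorem \ref{thm_Ca1} yields the contradiction.

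To do this I would extend the linearly independent set $\{\varphi_i\}_{i=1}^M$ to a linearly independent set $\{\varphi_i\}_{i=1}^{N-1}$ in $\mathbb{R}^N$ (no extension is needed if $M = N-1$); the appended vectors $\varphi_{M+1},\dots,\varphi_{N-1}$ are non-zero, so each $\varphi_i^\perp$ is genuinely a hyperplane. The key observation is the monotonicity of norm retrieval under enlarging the family of subspaces: if $\|P_i x\| = \|P_i y\|$ holds for all $i \in [N-1]$, then in particular it holds for all $i \in [M]$, and since $\{\varphi_i^\perp\}_{i=1}^M$ already does norm retrieval we conclude $\|x\| = \|y\|$. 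Hence the enlarged family $\{\varphi_i^\perp\}_{i=1}^{N-1}$ also does norm retrieval. But $\{\varphi_i\}_{i=1}^{N-1}$ is a linearly independent set of $N-1$ vectors in $\mathbb{R}^N$, so Theorem \ref{thm_Ca1} asserts that $\{\varphi_i^\perp\}_{i=1}^{N-1}$ \emph{cannot} do norm retrieval, a contradiction. Therefore $\{\varphi_i\}_{i=1}^M$ must span $\mathbb{R}^N$ or be linearly dependent.

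The only genuinely substantive point—and the step I would be most careful about—is the reduction from a general $M \le N-1$ to the exact count $N-1$ required by Theorem \ref{thm_Ca1}. This rests entirely on the monotonicity remark, namely that appending subspaces preserves norm retrieval (more intensity measurements can only sharpen, never weaken, the ability to recover the norm). Everything else is bookkeeping: the equivalence between failing to span while being independent and the cardinality bound $M \le N-1$, and the basic fact that a linearly independent set extends to a larger linearly independent set of any prescribed size up to $N$. Since no delicate estimate or case analysis intervenes, the argument is short once the extension-plus-monotonicity idea is in place.
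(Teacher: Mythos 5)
Your proof is correct and is precisely the intended derivation: the paper states the corollary as a direct consequence of Theorem \ref{thm_Ca1} without spelling out the argument, and your contrapositive reduction---extend the independent non-spanning family to $N-1$ independent vectors, then use the monotonicity of norm retrieval under enlarging the family to contradict Theorem \ref{thm_Ca1}---is exactly the omitted reasoning. No gaps; the monotonicity step you flag as the substantive point is indeed valid and is all that is needed.
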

We now consider the case where the independent set has exactly $N$ vectors. We first need the following lemma.
	\begin{lemma}\label{ortho}
	Suppose $\{\varphi_i\}_{i=1}^N$
	is a set of linearly independent unit vectors such that the hyperplanes $\{\varphi_i^{\perp}\}_{i=1}^N$ do norm retrieval in $\mathbb{R}^N$. If $x$ and $y$ are vectors in $\mathbb{R}^N$ satisfying 
	\[\langle x, \varphi_i\rangle = \pm1 \mbox{ and } \langle y, \varphi_i\rangle =\pm 1 \text{ for all } i \in [N],\] then $\|x\|=\|y\|$.
\end{lemma}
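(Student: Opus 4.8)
The plan is to use the norm-retrieval hypothesis for the hyperplanes, but applied not to the pair $(x,y)$ directly—which turns out to be circular—but to $x$ together with a suitable rescaling of $y$.

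First I would record the basic projection identity. Since each $\varphi_i$ is a unit vector, the orthogonal projection $P_i$ onto the hyperplane $\varphi_i^\perp$ is $P_i = I - \varphi_i\varphi_i^T$, so that $\|P_i z\|^2 = \|z\|^2 - \langle z,\varphi_i\rangle^2$ for every $z \in \mathbb{R}^N$. Applying this to $x$ and $y$ and using $\langle x,\varphi_i\rangle^2 = \langle y,\varphi_i\rangle^2 = 1$ gives $\|P_i x\|^2 = \|x\|^2 - 1$ and $\|P_i y\|^2 = \|y\|^2 - 1$ for every $i \in [N]$. At this stage one sees that the equalities $\|P_i x\| = \|P_i y\|$ for all $i$ are together equivalent to $\|x\| = \|y\|$, so feeding $(x,y)$ straight into the norm-retrieval hypothesis yields nothing. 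This is the main obstacle, and I expect it to be resolved by inserting a scaling parameter that makes the projection norms match for a genuinely different vector.

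Next I would check that $\|x\| > 1$ and $\|y\| > 1$. Indeed, Cauchy--Schwarz gives $1 = |\langle x,\varphi_1\rangle| \le \|x\|$, with equality only if $x = \pm\varphi_1$; but then $|\langle x,\varphi_2\rangle| = |\langle\varphi_1,\varphi_2\rangle| < 1$ by linear independence of the unit vectors, contradicting $\langle x,\varphi_2\rangle = \pm 1$. Hence $\|x\|^2 - 1 > 0$, and likewise $\|y\|^2 - 1 > 0$. This lets me set $t = \sqrt{(\|x\|^2-1)/(\|y\|^2-1)} > 0$ and consider the rescaled vector $ty$. Because $\langle ty,\varphi_i\rangle^2 = t^2$, I obtain $\|P_i(ty)\|^2 = t^2\|y\|^2 - t^2 = t^2(\|y\|^2-1) = \|x\|^2 - 1 = \|P_i x\|^2$ for every $i \in [N]$.

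Finally, since $\|P_i x\| = \|P_i(ty)\|$ for all $i \in [N]$, the norm-retrieval hypothesis for $\{\varphi_i^\perp\}_{i=1}^N$ applied to the pair $(x, ty)$ yields $\|x\| = \|ty\| = t\|y\|$. Squaring and substituting $t^2 = (\|x\|^2-1)/(\|y\|^2-1)$ gives $\|x\|^2(\|y\|^2-1) = (\|x\|^2-1)\|y\|^2$, which simplifies to $\|x\|^2 = \|y\|^2$, that is, $\|x\| = \|y\|$. The only genuine idea is the rescaling that breaks the circularity; the positivity of the denominators and the concluding algebra are short routine computations.
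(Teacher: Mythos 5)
Your proof is correct and takes essentially the same route as the paper: the paper rescales $x$ and $y$ to $x/\alpha$ and $y/\beta$ with $\alpha^2=\|x\|^2-1$ and $\beta^2=\|y\|^2-1$ so that all projections have norm $1$, applies the norm-retrieval hypothesis to this rescaled pair, and deduces $\alpha=\beta$ --- exactly your rescaling trick with $t=\alpha/\beta$. A minor bonus: your Cauchy--Schwarz argument that $\|x\|,\|y\|>1$ justifies the positivity of $\alpha$ and $\beta$, which the paper asserts without proof.
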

\begin{proof}
	Let $P_i$ be the orthogonal projection onto $\varphi_i^{\perp}, i\in [N]$. Then we have
	\[ x=P_1x\pm\varphi_1=P_2x\pm\varphi_2=\cdots = P_Nx\pm\varphi_N,\]
	and
	\[ y=P_1y\pm\varphi_1=P_2y\pm\varphi_2=\cdots= P_Ny\pm\varphi_N.\]
	Moreover,
	\[ \alpha^2:= \|P_1x\|^2=\|P_2x\|^2=\cdots=\|P_Nx\|^2=\|x\|^2-1,\]
	\[ \beta^2:=\|P_1y\|^2=\|P_2y\|^2=\cdots= \|P_Ny\|^2=\|y\|^2-1,\]
	where $\alpha,\beta>0$.
	
	Note that
	\[ \|P_i(x/\alpha)\|^2=\|P_i(y/\beta)\|^2=1\mbox{ for all }i\in[N].\]
	By our assumption of norm retrieval, $\|x/\alpha\|^2=\|y/\beta\|^2.$ Note also that
	
	\[ \|x/\alpha\|^2-1/\alpha^2 =1=\|y/\beta\|^2-1/\beta^2.\]
	Hence, $\alpha=\beta$ and so $\|x\|=\|y\|$.
\end{proof}

\begin{theorem}\label{thm_M}
	If the hyperplanes $\{\varphi_i^{\perp}\}_{i=1}^N$ do norm retrieval in $\mathbb{R}^N$ and $\{\varphi_i\}_{i=1}^N$
	is a linearly independent set of unit vectors, then $\{\varphi_i\}_{i=1}^N$ must be an orthonormal basis for $\mathbb{R}^N$.
\end{theorem}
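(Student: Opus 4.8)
The plan is to use Lemma~\ref{ortho} to pin down the Gram matrix of the $\varphi_i$. Since $\{\varphi_i\}_{i=1}^N$ is a basis of $\mathbb{R}^N$, for every sign vector $\epsilon=(\epsilon_1,\dots,\epsilon_N)\in\{\pm1\}^N$ there is a unique $x_\epsilon\in\mathbb{R}^N$ with $\langle x_\epsilon,\varphi_i\rangle=\epsilon_i$ for all $i\in[N]$. Collecting the $\varphi_i$ as the columns of a matrix $\Phi$ and writing $G=\Phi^\top\Phi$ for the Gram matrix (whose diagonal entries are all $1$ because the $\varphi_i$ are unit vectors), the defining relation $\Phi^\top x_\epsilon=\epsilon$ gives $x_\epsilon=(\Phi^\top)^{-1}\epsilon$ and hence $\|x_\epsilon\|^2=\epsilon^\top G^{-1}\epsilon$.

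By Lemma~\ref{ortho}, any two such vectors $x_\epsilon$ and $x_{\epsilon'}$ have the same norm, so the quadratic form $q(\epsilon):=\epsilon^\top G^{-1}\epsilon$ is constant on the cube $\{\pm1\}^N$. Writing $H=G^{-1}$, I would extract the off-diagonal information by a single-coordinate sign flip: replacing $\epsilon_k$ by $-\epsilon_k$ changes $q$ by exactly $-4\,\epsilon_k\sum_{j\ne k}H_{kj}\epsilon_j$. Constancy of $q$ forces $\sum_{j\ne k}H_{kj}\epsilon_j=0$ for every $\epsilon\in\{\pm1\}^N$ and every $k$, and letting the free signs vary one at a time yields $H_{kj}=0$ for all $j\ne k$. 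Thus $H=G^{-1}$ is diagonal, so $G$ is diagonal; combined with its unit diagonal this gives $G=I$, i.e.\ $\{\varphi_i\}_{i=1}^N$ is an orthonormal basis.

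The only delicate point is the passage from constancy of $q$ on the sign cube to the vanishing of the off-diagonal entries of $G^{-1}$; everything else is bookkeeping. An equivalent and perhaps more transparent route avoids the inverse Gram matrix altogether: if $\epsilon$ and $\epsilon'$ differ only in coordinate $k$, then $x_\epsilon-x_{\epsilon'}$ is a multiple of the dual basis vector $\psi_k$ (the vector with $\langle\psi_k,\varphi_i\rangle=\delta_{ik}$), while $x_\epsilon+x_{\epsilon'}$ lies in the span of the remaining $\psi_j$. Since $\|x_\epsilon\|=\|x_{\epsilon'}\|$ forces $\langle x_\epsilon-x_{\epsilon'},\,x_\epsilon+x_{\epsilon'}\rangle=0$, one obtains $\langle\psi_k,\psi_j\rangle=0$ for all $j\ne k$, so the dual basis is orthogonal; as the Gram matrix of the dual basis is $G^{-1}$, this again gives $G=I$. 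I expect the sign-flip step to be where the norm-retrieval hypothesis (through Lemma~\ref{ortho}) is genuinely used, and it is the crux of the argument.
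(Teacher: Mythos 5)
Your proof is correct, and while it rests on the same key lemma as the paper (Lemma~\ref{ortho}, which is indeed where the norm-retrieval hypothesis enters, exactly as you predicted), the bookkeeping is genuinely different. The paper fixes the last sign coordinate, averages the $2^{N-1}$ solution vectors $x(\lambda)$ over this half-cube, shows the average $\overline{x}$ is simultaneously orthogonal to $\varphi_1,\dots,\varphi_{N-1}$ and contained in $\spn\{\varphi_2,\dots,\varphi_N\}\cap\cdots\cap\spn\{\varphi_1,\dots,\varphi_{N-1},\varphi_N\}$, then invokes uniqueness of the basis representation to conclude $\overline{x}=2^{N-1}\varphi_N$, and finally iterates the whole construction over the choice of distinguished coordinate. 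Your argument replaces the averaging and iteration with a single global statement: constancy of $q(\epsilon)=\epsilon^\top G^{-1}\epsilon$ on the sign cube, from which one coordinate flip at a time kills every off-diagonal entry of $G^{-1}$ simultaneously, giving $G=I$ in one shot. Your second route makes the connection transparent: your identity $\langle x_\epsilon-x_{\epsilon'},x_\epsilon+x_{\epsilon'}\rangle=\|x_\epsilon\|^2-\|x_{\epsilon'}\|^2=0$ for a single-coordinate flip is precisely the paper's observation that $x(1,\mu)+x(-1,\mu)\perp x(1,\mu)-x(-1,\mu)$, but where the paper sums these relations over $\mu$ and reasons about $\overline{x}$, you read them off directly as $\sum_{j\ne k}\epsilon_j\langle\psi_k,\psi_j\rangle=0$ and conclude that the dual basis is orthogonal, hence $G^{-1}$ is diagonal. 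What your approach buys is symmetry in the coordinates (no ``this argument clearly iterates'' step, which in the paper tacitly requires redefining $L$ with a different fixed coordinate) and identification of the dual basis as the natural object; what the paper's buys is a more elementary, matrix-free presentation that never mentions $G^{-1}$. One small point worth making explicit in your write-up: you apply Lemma~\ref{ortho} across the full cube $\{\pm1\}^N$, which is legitimate since the lemma's hypothesis allows arbitrary sign patterns (the paper's restriction to $\lambda_N=1$ is only a convenience, as $x(-\lambda)=-x(\lambda)$).
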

\begin{proof}
	Define \[L=\{(\lambda_1, \lambda_2, \ldots, \lambda_N) : \lambda_i=\pm 1, i=1, 2, \ldots, N-1, \lambda_N=1\}.\] 
	For each $\lambda = (\lambda_1, \lambda_2, \ldots, \lambda_N)\in L$, let $x(\lambda)$ be the (unique) solution of the system of linear equations:
	\[\langle x, \varphi_i\rangle = \lambda_i, \quad i\in [N].\] By Lemma \ref{ortho} we have that $\|x(\lambda)\|=\|x(\lambda')\|$ for all $\lambda, \lambda'\in L$.
	
	Let $\overline{x}=\sum_{\lambda\in L}x(\lambda)$. We will show that $$\overline{x}\perp \varphi_i \mbox{ and } \overline{x}\in \spn\{\varphi_j : j\in [N], j\not=i\}$$ for all $i=1, 2, \ldots, N -1.$
	
	Indeed, without loss of generality, we can assume that $i=1$. Let 
	\[K=\{(\mu_2, \mu_3, \ldots, \mu_N): \mu_j=\pm 1, j=2, \ldots, N-1, \mu_N=1 \}.\] Note that $L=\{(1, \mu), (-1, \mu) : \mu=(\mu_2, \mu_3, \ldots, \mu_N)\in K\}$. For each $\mu\in K$, we have that
	\[\langle x(1, \mu), \varphi_1\rangle=1 \mbox{ and } \langle x(-1, \mu), \varphi_1\rangle=-1.\] Therefore, $x(1, \mu)+x(-1, \mu)\perp \varphi_1$ and hence, 
	\[\overline{x}=\sum_{\mu\in K}(x(1, \mu)+x(-1, \mu))\perp \varphi_1.\]
	Moreover, for each $\mu\in K$, we have that
	\[\langle x(1, \mu), \varphi_j\rangle=\langle x(-1, \mu), \varphi_j\rangle=\mu_j, \mbox{ for all } j=2, \ldots, N.\] This implies that 
	\[x(1, \mu)-x(-1, \mu)\perp \varphi_j, \mbox{ for all } j=2, \ldots, N.\]
	Since $\|x(1, \mu)\|=\|x(-1, \mu)\|$, it follows that $x(1, \mu)+x(-1, \mu)$ is orthogonal to $x(1, \mu)-x(-1, \mu)$ for all $\mu\in K$. Thus, $x(1, \mu)+x(-1, \mu)\in \spn\{\varphi_2, \ldots,\varphi_N\}$ for all $\mu\in K$ and so $$\overline{x}=\sum_{\mu\in K}(x(1, \mu)+x(-1, \mu))\in \spn \{\varphi_2, \ldots, \varphi_N\}.$$ This completes the proof of the claim.
	
	Since $\{\varphi_i\}_{i=1}^N$ is linearly independent, $\overline{x}$ has a unique representation as a linear combination of the vectors $\varphi_i$. Combining this with the fact that $ \overline{x}\in \spn\{\varphi_j : j\in [N], j\not=i\}$ for all $i=1, 2, \ldots N-1,$ we must have 
	$\overline{x}=\alpha \varphi_N$, for some scalar $\alpha$.
	
	Note that \[\alpha=\langle \overline{x}, \varphi_N\rangle=\sum_{\lambda\in L}\langle x(\lambda), \varphi_N\rangle=\sum_{\lambda\in L}1=2^{N-1}\not=0,\] and $\overline{x}\perp \varphi_i$ for $i=1, 2, \ldots, N-1$. So $\varphi_N$ is orthogonal to all other $\varphi_i$. This argument clearly iterates
	to show that $\varphi_i\perp \varphi_j$, for all $i\not= j$.
\end{proof}

It is worth mentioning that norm retrieval can be achieved using three hyperplanes in $\mathbb{R}^N$. This result was previously stated in \cite{Ca}; however, the proof provided there is unclear, as it does not demonstrate how $\|x\|$ can be reconstructed from the projection norms. In particular, the coefficients appearing in their linear combination are unknown. We will provide a correct and complete proof of this result.

\begin{theorem}
	In $\mathbb{R}^N$, three proper subspaces of codimension one can do norm retrieval.
\end{theorem}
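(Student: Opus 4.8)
The plan is to prove the statement by exhibiting an explicit configuration of three hyperplanes together with an explicit reconstruction of $\|x\|$ from the three projection norms. I would take a two-dimensional subspace $V\subset\RR^N$ and place in it three unit vectors $\varphi_1,\varphi_2,\varphi_3$ at mutual angles $120^\circ$, i.e. $\langle\varphi_i,\varphi_j\rangle=-\tfrac12$ for $i\neq j$, so that $\varphi_1+\varphi_2+\varphi_3=0$ (the Mercedes--Benz frame for $V$). The three hyperplanes are $W_i=\varphi_i^\perp$, each of codimension one. Writing $P_i$ for the orthogonal projection onto $W_i$, so that $P_i x=x-\langle x,\varphi_i\rangle\varphi_i$, the basic identity is
\[ \|P_i x\|^2=\|x\|^2-\langle x,\varphi_i\rangle^2,\qquad i=1,2,3. \]
Setting $t=\|x\|^2$, $u_i=\langle x,\varphi_i\rangle$, and $p_i=\|P_i x\|^2$, I then have $u_i^2=t-p_i$. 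The point of the $120^\circ$ choice is the extra linear constraint $u_1+u_2+u_3=\langle x,\varphi_1+\varphi_2+\varphi_3\rangle=0$, which is what will make three measurements suffice; note also that any component of $x$ orthogonal to $V$ merely shifts all three $p_i$ by the same amount and is therefore harmless.

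The core step is to recover $t$ from $(p_1,p_2,p_3)$. After relabelling so that $p_1\le p_2\le p_3$ (hence $u_1^2\ge u_2^2\ge u_3^2$), I would use the elementary fact that three reals summing to zero always have their largest magnitude equal to the sum of the other two. Applied to $u_1+u_2+u_3=0$ this gives $|u_1|=|u_2|+|u_3|$, that is,
\[ \sqrt{t-p_1}=\sqrt{t-p_2}+\sqrt{t-p_3}. \]
I would then show this has a unique solution $t\ge p_3$ by examining $F(t)=\sqrt{t-p_1}-\sqrt{t-p_2}-\sqrt{t-p_3}$: one checks $F(p_3)=\sqrt{p_3-p_1}-\sqrt{p_3-p_2}\ge 0$, that $F(t)\to-\infty$ as $t\to\infty$, and that $F'(t)<0$ on $(p_3,\infty)$ since $t-p_1\ge t-p_2\ge t-p_3$ forces $(t-p_1)^{-1/2}\le(t-p_2)^{-1/2}$. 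Thus $F$ is strictly decreasing with a single zero, equal to $\|x\|^2$; squaring twice even expresses $t$ as an explicit root of a quadratic, which answers the objection to the argument of \cite{Ca} raised above.

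Norm retrieval then follows at once: if $x$ and $y$ satisfy $\|P_i x\|=\|P_i y\|$ for all $i$, they yield the same triple $(p_1,p_2,p_3)$, and since $\|x\|^2$ and $\|y\|^2$ are both the unique zero of the same $F$, we get $\|x\|=\|y\|$. Alternatively, a direct argument avoids the function $F$: writing $v_i=\langle y,\varphi_i\rangle$ one has $u_i^2-v_i^2=\|x\|^2-\|y\|^2=:d$ independent of $i$, together with $\sum u_i=\sum v_i=0$; assuming $d>0$ and substituting $|u_1|=|u_2|+|u_3|$, $|v_1|=|v_2|+|v_3|$ into $u_i^2=v_i^2+d$ leads to $2|v_2||v_3|=d+2\sqrt{(v_2^2+d)(v_3^2+d)}\ge d+2|v_2||v_3|$, forcing $d\le 0$, a contradiction.

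I expect the main obstacle to be the bookkeeping around the sign and ordering step: proving that the single scalar constraint $\sum_i\langle x,\varphi_i\rangle=0$ genuinely collapses to the clean relation $|u_1|=|u_2|+|u_3|$, handling the boundary cases where some $p_i$ coincide or some $u_i$ vanish, and verifying that the reconstructed $t$ is the unique admissible value rather than a spurious root introduced by squaring. Everything else is a routine monotonicity computation.
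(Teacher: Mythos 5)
Your proof is correct, but it takes a genuinely different route from the paper's. The paper uses the asymmetric configuration $\varphi_1=e_1$, $\varphi_2=e_2$, $\varphi_3=(e_1-e_2)/\sqrt{2}$ and recovers $a_1^2-a_2^2$ and $a_1a_2$ as explicit linear combinations of the squared projection norms, then reconstructs $a_1^2+a_2^2$ from the identity $(a_1^2+a_2^2)^2=(a_1^2-a_2^2)^2+4a_1^2a_2^2$, yielding a closed-form formula for $\|x\|^2$ with no case analysis. You instead take the symmetric Mercedes--Benz configuration, exploit the dependency $u_1+u_2+u_3=0$ together with the (correct) elementary fact that among three reals summing to zero the largest magnitude equals the sum of the other two --- if the two smaller ones had opposite signs, the sum relation would force the largest magnitude to be $\max(|u_2|,|u_3|)-\min(|u_2|,|u_3|)$, contradicting maximality unless the minimum vanishes --- and then recover $t=\|x\|^2$ as the unique zero of the strictly decreasing function $F$ on $[p_3,\infty)$. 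Your monotonicity analysis is sound, including the tie cases ($F(p_3)\ge 0$ needs only $p_1\le p_2$, and $F'(t)\le -\tfrac{1}{2}(t-p_3)^{-1/2}<0$ settles uniqueness, so the spurious roots introduced by squaring never matter), and crucially the relabelling by the $p_i$ is legitimate because $x$ and $y$ produce the same ordering; the equal-magnitude coefficients in the dependency $\varphi_1+\varphi_2+\varphi_3=0$ are exactly what make the ordering of the $|u_i|$ readable from the measurements alone. Your alternative direct argument is also valid: with $d=\|x\|^2-\|y\|^2>0$ one gets $2|v_2||v_3|=d+2\sqrt{(v_2^2+d)(v_3^2+d)}\ge d+2|v_2||v_3|$, forcing $d\le 0$, and $d<0$ follows by exchanging $x$ and $y$. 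In summary, the paper's choice buys an explicit reconstruction with purely algebraic bookkeeping, while yours buys symmetry and a uniqueness-of-root mechanism that cleanly answers the same objection to \cite{Ca} that motivated the paper's proof; both handle the component orthogonal to the two-dimensional span through the same identity $\|P_ix\|^2=\|x\|^2-\langle x,\varphi_i\rangle^2$.
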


\begin{proof}
	Let $\{e_i\}_{i=1}^N$ be the canonical orthonormal basis of $\mathbb{R}^N$, and define
	\[
	\varphi_1 = e_1, 
	\quad 
	\varphi_2 = e_2, 
	\quad 
	\varphi_3 = \frac{e_1 - e_2}{\sqrt{2}}.
	\]
	Let $P_i$ denote the orthogonal projection onto $\varphi_i^\perp$, $i=1,2,3$.
	We will show that the family $\{\varphi_i^\perp\}_{i=1}^3$ performs norm retrieval.
	
	Let $x=(a_1,a_2,\dots,a_N)\in\mathbb{R}^N$ and set
	\[
	S := \sum_{i=3}^N a_i^2.
	\]
	Then we have that
	\begin{align}
		\|P_1 x\|^2 &= a_2^2 + S, \label{eq:P1}
		\end{align}
		\begin{align}
		\|P_2 x\|^2 &= a_1^2 + S, \label{eq:P2}
		\end{align}
		\begin{align}
		\|P_3 x\|^2 &= \frac{(a_1+a_2)^2}{2} + S. \label{eq:P3}
	\end{align}
		
	Subtracting \eqref{eq:P1} from \eqref{eq:P2} gives
	\begin{equation}\label{eq:diff}
		\|P_2 x\|^2 - \|P_1 x\|^2 = a_1^2 - a_2^2.
	\end{equation}
	Moreover, from \eqref{eq:P1}, \eqref{eq:P2}, and \eqref{eq:P3} we have
	\begin{align*}
		\|P_1 x\|^2 + \|P_2 x\|^2 - 2\|P_3 x\|^2
		&= a_1^2 + a_2^2 - (a_1+a_2)^2 \\
		&= -2a_1a_2.
	\end{align*}
	Hence
	\begin{equation}\label{eq:prod}
		a_1a_2
		= \|P_3 x\|^2 - \frac12(\|P_1 x\|^2 + \|P_2 x\|^2).
	\end{equation}
	
	From \eqref{eq:diff} and \eqref{eq:prod}, both $a_1^2-a_2^2$ and $a_1a_2$ are determined by the projection norms. Consequently,
	\[
	(a_1^2+a_2^2)^2
	= (a_1^2-a_2^2)^2 + 4a_1^2a_2^2
	\]
	is determined, and since $a_1^2+a_2^2\ge 0$, the quantity $a_1^2+a_2^2$ is uniquely determined.
	
It follows that 
	\[
	\|x\|^2
	= a_1^2 + a_2^2 + S
	= \dfrac{1}{2}\left(\|P_1 x\|^2 + \|P_2 x\|^2\right) +\dfrac{1}{2}\left(a_1^2+a_2^2\right),
	\]
	which depends only on the projection norms.
	Therefore, the hyperplanes $\{\varphi_i^\perp\}_{i=1}^3$ perform norm retrieval in $\mathbb{R}^N$.
\end{proof}

\begin{remark}
	Three hyperplanes: $e_1^\perp, e_2^\perp$ and $\left((e_1+e_2)/\sqrt{2}\right)^\perp$ also do norm retrieval.
\end{remark}
The following provides a construction in which both the vectors and their orthogonal complements yield phase retrieval.
\begin{theorem}
	Let $\{e_i\}_{i=1}^N$ and $\{\psi_i\}_{i=1}^N$ be orthonormal bases for $\mathbb{R}^N$ such that, when taken together, they form a full spark family. Define
	$\varphi_i = e_i \text{ for } i=1,2,\ldots,N, 
	\text{ and } 
	\varphi_{N+i} = \psi_i  \text{ for } i=1,2,\ldots,N-1.$
	Then both $\{\varphi_i\}_{i=1}^{2N-1}$ and $\{\varphi_i^{\perp}\}_{i=1}^{2N-1}$ yield phase retrieval.
\end{theorem}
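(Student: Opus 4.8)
The plan is to treat the two assertions separately and to observe that, after the first one is in hand, the second reduces almost entirely—via Theorem \ref{passorth}—to a statement about the orthonormal basis $\{e_i\}$ alone. First I would prove phase retrieval of the vectors $\{\varphi_i\}_{i=1}^{2N-1}$. These are $2N-1$ of the $2N$ vectors $e_1,\dots,e_N,\psi_1,\dots,\psi_N$, which by hypothesis form a full spark family. Any $N$-element subset of $\{\varphi_i\}_{i=1}^{2N-1}$ is in particular an $N$-element subset of this full spark family, hence linearly independent; so $\{\varphi_i\}_{i=1}^{2N-1}$ is itself full spark. Being full spark with exactly $M=2N-1$ vectors, it has the complement property and therefore does phase retrieval by Theorem \ref{com}. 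Equivalently, writing $W_i=\spn\{\varphi_i\}$ with projection $P_{W_i}$, the one-dimensional subspaces $\{W_i\}_{i=1}^{2N-1}$ do phase retrieval, since $\|P_{W_i}x\|=|\langle x,\varphi_i\rangle|$ for the unit vectors $\varphi_i$.

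For the second assertion I would apply Theorem \ref{passorth} with these subspaces $W_i=\spn\{\varphi_i\}$, so that $W_i^\perp=\varphi_i^\perp$. Since $\{W_i\}_{i=1}^{2N-1}$ does phase retrieval by the previous step, Theorem \ref{passorth} says that $\{\varphi_i^\perp\}_{i=1}^{2N-1}$ does phase retrieval if and only if it does norm retrieval. Thus it suffices to establish norm retrieval of the hyperplanes $\{\varphi_i^\perp\}_{i=1}^{2N-1}$.

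Norm retrieval is the easy part, and this is where the structure $\varphi_i=e_i$ for $i\le N$ does all the work. Let $P_i$ denote the projection onto $\varphi_i^\perp$, so that $\|P_i x\|^2=\|x\|^2-\langle x,e_i\rangle^2$ for $i\le N$. Summing over $i=1,\dots,N$ and using $\sum_{i=1}^N\langle x,e_i\rangle^2=\|x\|^2$ yields the identity $\sum_{i=1}^N\|P_i x\|^2=(N-1)\|x\|^2$, valid for $N\ge 2$. Hence $\|x\|$ is recovered explicitly from $\|P_1x\|,\dots,\|P_Nx\|$, so the hyperplanes $\{e_i^\perp\}_{i=1}^N$ already do norm retrieval; adjoining the further hyperplanes $\{\psi_i^\perp\}_{i=1}^{N-1}$ only supplies extra measurements and cannot destroy this property. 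Therefore $\{\varphi_i^\perp\}_{i=1}^{2N-1}$ does norm retrieval, and combined with Theorem \ref{passorth} this gives phase retrieval of the complements.

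The argument is short once organized this way, and the point I would flag as the main obstacle is recognizing the correct division of labor between the hypotheses: the full spark assumption on the combined family $\{e_i\}\cup\{\psi_i\}$ is genuinely needed only for phase retrieval of the vectors, while phase retrieval of the complements follows from Theorem \ref{passorth} together with a norm-retrieval fact that uses only the orthonormal basis $\{e_i\}$. One should also note the degenerate case $N=1$, where the hyperplanes reduce to $\{0\}$ and the norm-retrieval identity breaks down; the statement is meant for $N\ge 2$.
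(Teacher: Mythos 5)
Your proposal is correct and follows essentially the same route as the paper: the full spark property gives the complement property for the $2N-1$ vectors, and the identity $\sum_{i=1}^N\|P_ix\|^2=(N-1)\|x\|^2$ for the hyperplanes $\{e_i^\perp\}_{i=1}^N$ gives norm retrieval, which combined with Theorem~\ref{passorth} yields phase retrieval of the complements. Your added remark about the degenerate case $N=1$ is a fair observation that the paper leaves implicit, but it does not change the argument.
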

\begin{proof}
	If we partition the family $\{\varphi_i\}_{i=1}^{2N-1}$ into two sets, then one of them must contain at least $N$ elements. By the full spark property, this subset spans $\mathbb{R}^N$. In other words, this family satisfies the complement property and therefore yields phase retrieval.
	
	Let $P_i$ be the orthogonal projection onto $\varphi_i^\perp$, for $i\in [N]$. Note that for all $x\in\mathbb{R}^N$, we have
	\[
	\|P_i x\|^2 = \sum_{j \neq i} |\langle x, e_j\rangle|^2,\] 
	and hence
	\[\sum_{i=1}^N \|P_i x\|^2 = (N-1)\|x\|^2.
	\]
	It follows that $\{\varphi_i^{\perp}\}_{i=1}^{2N-1}$ yields norm retrieval, and consequently phase retrieval by Theorem~\ref{passorth}.
\end{proof}
\section{Phase and Norm retrieval by subspaces}
In this section, we present new results on phase and norm retrieval by subspaces. We first provide an alternative proof of an important classification result for subspaces that perform phase retrieval. The following lemma is the key component of the proof and is applicable to both real and complex settings.

\begin{lemma}\label{L4} Let $W$ be an $p$-dimensional subspace of $\mathcal{H}^N$. If $x,y\in W$ satisfy $\|x\|=\|y\|$, then there exists an orthonormal basis $\{\varphi_i\}_{i=1}^p$ for $W$ such that $|\langle x, \varphi_i \rangle| = |\langle y, \varphi_i \rangle|$ for all $i \in [p]$.
\end{lemma}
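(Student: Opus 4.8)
The plan is to recast the conclusion as a statement about the diagonal of a single self-adjoint operator. Fixing any orthonormal basis of $W$, I identify $W$ with $\FF^p$ (where $\FF=\RR$ or $\CC$) and introduce the rank-$\le 2$ operator $T\colon W\to W$ defined by $Tw=\ip{w}{x}x-\ip{w}{y}y$. A direct computation gives $\ip{Tw}{w}=\absip{w}{x}^2-\absip{w}{y}^2$, so that for an orthonormal basis $\{\varphi_i\}_{i=1}^p$ of $W$ the desired equalities $\absip{x}{\varphi_i}=\absip{y}{\varphi_i}$ hold for all $i$ precisely when every diagonal entry $\ip{T\varphi_i}{\varphi_i}$ vanishes. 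One checks that $T$ is self-adjoint and that $\tr T=\norm{x}^2-\norm{y}^2=0$. Thus the lemma reduces to the purely linear-algebraic claim: a self-adjoint operator of trace zero admits an orthonormal basis in which its matrix has zero diagonal.

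For this claim I would invoke the Schur--Horn theorem, which says that a self-adjoint operator has an orthonormal basis realizing a prescribed diagonal $d\in\RR^p$ if and only if $d$ is majorized by the (real) eigenvalue vector $\lambda$ of $T$. Here $d=0$, and since $\sum_i\lambda_i=\tr T=0$ the two vectors have equal sum; moreover $0$ is majorized by $\lambda$ because the partial sums $\sum_{i=1}^k\lambda_i^\downarrow$ of the decreasingly ordered eigenvalues are all nonnegative --- they form a unimodal sequence (the increments $\lambda_i^\downarrow$ are nonincreasing) whose endpoints are $\lambda_1^\downarrow\ge 0$ and $\sum_i\lambda_i=0$. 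Hence the zero diagonal is attainable, and pulling the resulting basis back through the identification $W\cong\FF^p$ produces the required orthonormal basis. The same statement of Schur--Horn holds verbatim for real symmetric matrices under orthogonal conjugation, so this argument covers the real and complex cases uniformly.

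If a self-contained argument is preferred over citing Schur--Horn, I would instead proceed by induction on $p$ using the intermediate value theorem (base case $p=1$, where $\tr T=0$ forces $T=0$). The key observation is that when $\tr T=0$ the quadratic form $w\mapsto\ip{Tw}{w}$ takes both a nonnegative and a nonpositive value on the unit sphere of $W$ (its values on any orthonormal basis average to $\tfrac1p\tr T=0$); since the sphere is connected, there is a unit vector $v$ with $\ip{Tv}{v}=0$. Taking $\varphi_1=v$ and passing to the compression of $T$ to $v^\perp$, whose trace equals $\tr T-\ip{Tv}{v}=0$, the inductive hypothesis supplies an orthonormal basis of $v^\perp$ on which that compression --- and hence $T$ itself, since $\ip{T\varphi_i}{\varphi_i}=\ip{PTP\varphi_i}{\varphi_i}$ for $\varphi_i\in v^\perp$ --- has zero diagonal.

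The trivial case $x=cy$ with $|c|=1$ (in particular $x=\pm y$ in the real setting) is immediate, since then $\absip{x}{\varphi}=\absip{y}{\varphi}$ for every $\varphi$ and any orthonormal basis works; the content lies entirely in the linearly independent case. I expect the main obstacle to be the existence step --- Horn's (nontrivial) direction of Schur--Horn, or, in the elementary route, the existence of the vanishing vector $v$ together with the verification that the compression to $v^\perp$ remains traceless. The remaining points --- self-adjointness of $T$, the trace computation, and the inner-product bookkeeping distinguishing $\ip{x}{\varphi}$ from $\ip{\varphi}{x}$ --- are routine but should be stated carefully so that the reduction is airtight.
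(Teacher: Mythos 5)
Your proof is correct, but it takes a genuinely different---and heavier---route than the paper's. You linearize the problem: with $T=\ip{\cdot}{x}x-\ip{\cdot}{y}y$ on $W$, a self-adjoint operator satisfying $\ip{Tw}{w}=\absip{w}{x}^2-\absip{w}{y}^2$ and $\tr T=\|x\|^2-\|y\|^2=0$, the lemma becomes the classical fact that a traceless self-adjoint operator admits an orthonormal basis in which its diagonal vanishes. Both of your routes to that fact are sound: the Schur--Horn route, where your majorization check is right (the partial sums of the decreasingly ordered eigenvalues form a concave sequence with endpoints $\lambda_1^\downarrow\ge 0$ and $0$, hence are nonnegative), and the elementary IVT-plus-compression induction, including the key traceless step $\tr\bigl(PTP|_{v^\perp}\bigr)=\tr T-\ip{Tv}{v}=0$; both work uniformly over $\RR$ and $\CC$, as required. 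The paper argues far more cheaply and directly: since $W\cap(x-y)^\perp$ has dimension at least $p-1$, one iteratively selects an orthonormal set $\{\varphi_i\}_{i=1}^{p-1}$ inside it, so that $\ip{x}{\varphi_i}=\ip{y}{\varphi_i}$ \emph{exactly} (not merely in modulus), completes it to an orthonormal basis of $W$ by one vector $\varphi_p$, and then Parseval within $W$---this is precisely where the hypothesis $x,y\in W$ enters---forces $\absip{x}{\varphi_p}=\absip{y}{\varphi_p}$ from $\|x\|=\|y\|$. Your approach buys generality: the zero-diagonal statement applies to an arbitrary traceless self-adjoint $T$, not just one of rank at most $2$, at the cost of invoking Schur--Horn or running an induction. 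The paper's approach buys a few-line self-contained argument and the strictly stronger conclusion that $p-1$ of the inner products agree including sign (or phase), with only the final one matched in absolute value.
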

\begin{proof}
	If $x=y$ or $p=1$, the statement is trivial. Hence, we assume that $x\neq y$ and $p\geq 2$.
	
	Since $p\geq 2$, we can choose a unit vector $\varphi_1\in W \cap (x-y)^\perp$. Then
	\[
	\langle x,\varphi_1\rangle=\langle y,\varphi_1\rangle.
	\]
	If $p>2$, we choose a unit vector $\varphi_2\in W \cap \{\varphi_1, x-y\}^\perp$. Then
	\[\varphi_2\perp \varphi_1 \mbox{ and } \langle x, \varphi_2\rangle = \langle y, \varphi_2\rangle.\]
	
	Continuing this process, we obtain an orthonormal set $\{\varphi_i\}_{i=1}^{p-1}\subset W$ satisfying $\langle x, \varphi_i \rangle = \langle y, \varphi_i \rangle$ for all $i = 1, 2, \dots, p-1$.
	
Finally, choose a unit vector $\varphi_p$ in $W$ that is orthogonal to all the vectors $\{\varphi_i\}_{i=1}^{p-1}$. Then $\{\varphi_i\}_{i=1}^p$ is an orthonormal basis for $W$. Moreover, 
	\[	\sum_{i=1}^p |\langle x, \varphi_i \rangle|^2 = \|x\|^2 = \|y\|^2 = \sum_{i=1}^p |\langle y, \varphi_i \rangle|^2.\]
	It follows that $|\langle x, \varphi_p\rangle|=|\langle y, \varphi_p\rangle|$.
	This completes the proof.
\end{proof}
\begin{theorem}[\cite{CCPW}, \cite{CCJW}]\label{T21}
	Let $\{W_i\}_{i=1}^M$ be subspaces of $\mathcal{H}^N$ with corresponding orthogonal projections $\{P_i\}_{i=1}^M$. The following are equivalent:
	\begin{enumerate}
		\item $\{W_i\}_{i=1}^M$ does phase retrieval.
		\item If $\{\varphi_{ij}\}_{j=1}^{N_i}$ is an orthonormal basis for $W_i$, $i\in [M]$, then the collection $\{\varphi_{ij}\}_{i=1,j=1}^{M,\ N_i}$
		does phase retrieval.
	\end{enumerate}
\end{theorem}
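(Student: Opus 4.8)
The plan is to pivot everything on the elementary identity
\[
\|P_i x\|^2 = \sum_{j=1}^{N_i} |\langle x, \varphi_{ij}\rangle|^2,
\]
valid for any orthonormal basis $\{\varphi_{ij}\}_{j=1}^{N_i}$ of $W_i$, together with Lemma \ref{L4} to pass from projection norms back to absolute values of individual inner products. I read statement (2) as asserting that the union $\{\varphi_{ij}\}$ does phase retrieval for \emph{every} choice of orthonormal bases of the $W_i$, which is what makes it directly comparable with the basis-free statement (1).

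For the implication (1) $\Rightarrow$ (2), I would fix an arbitrary family of orthonormal bases and suppose $|\langle x, \varphi_{ij}\rangle| = |\langle y, \varphi_{ij}\rangle|$ for all $i,j$. Squaring and summing over $j$, the displayed identity immediately gives $\|P_i x\| = \|P_i y\|$ for every $i$; hypothesis (1) then yields $x = cy$ with $|c| = 1$. This direction is routine and needs no extra input.

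The substance is in (2) $\Rightarrow$ (1). Here I would start from $x,y$ with $\|P_i x\| = \|P_i y\|$ for all $i$. For each fixed $i$ the vectors $P_i x$ and $P_i y$ lie in $W_i$ and have equal norm, so Lemma \ref{L4} supplies an orthonormal basis $\{\varphi_{ij}\}_{j=1}^{N_i}$ of $W_i$ with $|\langle P_i x, \varphi_{ij}\rangle| = |\langle P_i y, \varphi_{ij}\rangle|$ for all $j$. Since each $\varphi_{ij} \in W_i$ and $P_i$ is a self-adjoint projection, $\langle P_i x, \varphi_{ij}\rangle = \langle x, \varphi_{ij}\rangle$ and likewise for $y$, so these absolute values agree across the whole collection $\{\varphi_{ij}\}$. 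Applying hypothesis (2) to precisely this choice of bases then forces $x = cy$.

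The main obstacle — and the reason the equivalence requires care — is that the basis produced by Lemma \ref{L4} depends on the particular pair $(x,y)$. Consequently (2) cannot be weakened to \emph{some} fixed choice of bases; it must hold for every choice, so that for each pair of signals we are free to select the adapted bases delivered by the lemma. The self-adjointness step identifying $\langle P_i x, \varphi_{ij}\rangle$ with $\langle x, \varphi_{ij}\rangle$ is the small technical bridge that lets the per-subspace conclusion of Lemma \ref{L4} be read off from inner products with the global frame rather than with the projected vectors.
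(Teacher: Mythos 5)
Your proposal is correct and follows essentially the same route as the paper: the identity $\|P_i x\|^2 = \sum_j |\langle x,\varphi_{ij}\rangle|^2$ for (1) $\Rightarrow$ (2), and Lemma \ref{L4} to build adapted bases for (2) $\Rightarrow$ (1). In fact you are slightly more careful than the paper, which applies Lemma \ref{L4} directly to $x,y$ even though its hypothesis requires vectors in $W_i$; your explicit reduction to $P_i x, P_i y$ via self-adjointness of $P_i$ fills in exactly the step the paper leaves implicit.
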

\begin{proof}
	$(1)\Rightarrow (2)$:
	Assume $x,y\in \mathcal{H}^N$ satisfy $|\langle x, \varphi_{ij}\rangle|=|\langle y, \varphi_{ij}\rangle$ for all $i,j$.
	Then for each $i\in [M]$, we have
	\[ \|P_ix\|^2=\sum_{j=1}^{N_i}|\langle x,\varphi_{ij}\rangle|^2=\sum_{j=1}^{N_i}|\langle y,\varphi_{ij}\rangle|^2=\|P_iy\|^2.\]
	By (1), it follows that $x=cy$ for some scalar $c$ with $|c|=1$.
	\vskip7pt
	
	$(2)\Rightarrow (1)$:
	Let $x,y\in \mathcal{H}^N$ satisfy $\|P_ix\|=\|P_iy\|$ for all $i\in [M]$. By Lemma \ref{L4}, there exist orthonormal
	bases $\{\varphi_{ij}\}_{j=1}^{N_i}$ for $W_i$ such that $|\langle x,\varphi_{ij}\rangle|=|\langle y,\varphi_{ij}\rangle |$
	for all $i, j$. Since (2) assumes the collection $\{\varphi_{ij}\}_{i=1,j=1}^{M,\ N_i}$ does phase retrieval, we conclude that $x=cy$ with $|c|=1$.
\end{proof}
\begin{remark}
	In \cite{CCPW}, the authors provide a proof of Theorem \ref{T21} for the real case, while the proof for the complex case appears in \cite{CCJW}. However, these proofs are rather technical, particularly in the complex setting.
\end{remark}
The following theorem appears in \cite{CC}, but we will provide a proof here for completeness.

\begin{theorem}\label{thm_PD}
	Given orthogonal projections $\{P_i\}_{i=1}^M$ on $\mathbb{R}^N$, the following statements are equivalent.
	\begin{enumerate}
		\item $\{P_i\}_{i=1}^M$ does norm retrieval.
		\item For every $x\in\mathbb{R}^N$, we have $ [\spn\{P_ix\}_{i=1}^M]^\perp\subset x^\perp$.
		\item For every $x\in\mathbb{R}^N$, we have $x\in \spn\{P_ix\}_{i=1}^M$.
	\end{enumerate}
\end{theorem}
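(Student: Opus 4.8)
The plan is to pass to the Hilbert--Schmidt (Frobenius) inner product on the space of real symmetric $N\times N$ matrices, where all three conditions become statements about orthogonality to the matrices $\{P_i\}$.

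First I would dispose of $(2)\Leftrightarrow(3)$, which is purely formal: writing $V=\spn\{P_ix\}_{i=1}^M$, the inclusion $V^\perp\subset x^\perp$ is equivalent, by taking orthogonal complements in the finite-dimensional space $\mathbb{R}^N$, to $\spn\{x\}=(x^\perp)^\perp\subset (V^\perp)^\perp=V$, that is, to $x\in V$. This uses nothing beyond $(V^\perp)^\perp=V$.

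For the substantive equivalence $(1)\Leftrightarrow(2)$ I would first record the translations. Since each $P_i$ is an orthogonal projection ($P_i=P_i^T=P_i^2$), one has $\|P_ix\|^2=\tr(P_i\,xx^T)$ and $\|x\|^2=\tr(xx^T)$. Thus for $x,y\in\mathbb{R}^N$ and $Q:=xx^T-yy^T$, the hypothesis $\|P_ix\|=\|P_iy\|$ for all $i$ reads $\tr(P_iQ)=0$ for all $i$, while the conclusion $\|x\|=\|y\|$ reads $\tr(Q)=0$. Hence $(1)$ is exactly the assertion
\[
\text{for every } Q\in\mathcal{A}:\quad \tr(P_iQ)=0\ (\forall i)\ \Longrightarrow\ \tr(Q)=0,
\]
where $\mathcal{A}=\{xx^T-yy^T : x,y\in\mathbb{R}^N\}$. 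Similarly $(2)$ unpacks to: for all $x,z$, if $\langle z,P_ix\rangle=0$ for all $i$ then $\langle z,x\rangle=0$; and with $Q:=zx^T+xz^T$ (using $P_i^T=P_i$) one computes $\tr(P_iQ)=2\langle z,P_ix\rangle$ and $\tr(Q)=2\langle z,x\rangle$, so $(2)$ becomes the \emph{same} implication ranging over $Q\in\mathcal{B}=\{zx^T+xz^T : x,z\in\mathbb{R}^N\}$.

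The crux is then the purely algebraic claim $\mathcal{A}=\mathcal{B}$, after which $(1)$ and $(2)$ are literally the same universally quantified statement and we are done. I would prove $\mathcal{A}=\mathcal{B}$ by the polarization-type identity
\[
xx^T-yy^T=uv^T+vu^T,\qquad u=\tfrac{x+y}{\sqrt2},\ v=\tfrac{x-y}{\sqrt2},
\]
which gives $\mathcal{A}\subset\mathcal{B}$, together with the inverse substitution $p=\tfrac{u+v}{\sqrt2},\,q=\tfrac{u-v}{\sqrt2}$ yielding $uv^T+vu^T=pp^T-qq^T$ and hence $\mathcal{B}\subset\mathcal{A}$; each is a one-line expansion. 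I expect this identity to be the only real step: the main (mild) obstacle is recognizing that norm retrieval and condition $(2)$ are the same orthogonality test applied to two a priori different parametrized families of rank-$\le 2$ symmetric matrices, and that these families in fact coincide.
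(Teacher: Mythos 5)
Your proof is correct: the translations $\|P_ix\|^2=\tr(P_i\,xx^T)$ and $\tr\bigl(P_i(zx^T+xz^T)\bigr)=2\langle z,P_ix\rangle$ are valid, both polarization identities check out, and since the hypothesis and conclusion of your orthogonality test depend only on the matrix $Q$ and not on the representing pair of vectors, quantifying over the sets $\mathcal{A}$ and $\mathcal{B}$ rather than over pairs is legitimate. That said, your argument is the paper's proof in linearized dress rather than a truly independent route. The paper proves $(1)\Rightarrow(2)$ and $(2)\Rightarrow(1)$ separately, each via the substitution $u=x+y$, $v=x-y$: for $(1)\Rightarrow(2)$ it observes that $y\perp\spn\{P_ix\}_{i=1}^M$ forces $\|P_i(x+y)\|=\|P_i(x-y)\|$ for all $i$, so norm retrieval gives $\|x+y\|=\|x-y\|$, i.e.\ $x\perp y$; for $(2)\Rightarrow(1)$ it computes $\langle x+y,\,P_i(x-y)\rangle=\|P_ix\|^2-\|P_iy\|^2=0$ and applies $(2)$ to the vector $x-y$. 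Your two inclusions $\mathcal{A}\subset\mathcal{B}$ and $\mathcal{B}\subset\mathcal{A}$, implemented by $u=(x+y)/\sqrt2$, $v=(x-y)/\sqrt2$ and its inverse, correspond term for term to these two implications; indeed $\tr\bigl(P_i(uv^T+vu^T)\bigr)=2\langle u,P_iv\rangle$ is exactly the paper's displayed computation. What your formulation buys is genuine but modest: it fuses the two directions into the single set identity $\mathcal{A}=\mathcal{B}$, it makes visible that $(1)$ and $(2)$ are one and the same Hilbert--Schmidt orthogonality test against the $P_i$ applied to a family of symmetric matrices of rank at most two, and it connects the theorem to the standard lifting $x\mapsto xx^T$ used throughout phase retrieval; the paper's version, by contrast, is shorter and never leaves $\mathbb{R}^N$. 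Your disposal of $(2)\Leftrightarrow(3)$ by double orthogonal complements is the same one-line observation the paper makes.
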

\begin{proof}
	$(1)\Rightarrow (2)$: Let $y\in [\spn\{P_ix\}_{i=1}^M]^\perp$. Then 
	\[\langle P_iy, P_ix\rangle=\langle y, P_ix\rangle= 0, \ \mbox{ for all } i\in [M].\]
	Let $u=x+y, v=x-y$, then 
	\[\|P_iu\|^2=\langle P_ix+P_iy, P_ix+P_iy\rangle=\|P_ix\|^2+\|P_iy\|^2,\]
	\[\|P_iv\|^2=\langle P_ix-P_iy, P_ix-P_iy\rangle=\|P_ix\|^2+\|P_iy\|^2,\] for all $i$. Therefore, $\|u\|=\|v\|$ and so $y\perp x$.
	
	$(2)\Rightarrow (1)$: Suppose that $x, y\in\mathbb{R}^n$ satisfy $\|P_ix\|=\|P_iy\|$ for all $i$. Let $u=x+y, v=x-y$, then 
	\[\langle u, P_iv\rangle=\langle P_iu, P_iv\rangle=\langle P_ix+P_iy, P_ix-P_iy\rangle=0, \ \mbox{ for all } i\in [M].\]
	Thus, $u\perp \spn\{P_iv\}_{i=1}^M$. Since $[\spn\{P_iv\}_{i=1}^M]^\perp\subset v^\perp$, it follows that $u\perp v$ and hence $\|x\|=\|y\|$.
	
	$(2)\Leftrightarrow (3)$: This follows immediately.
\end{proof}

In \cite{H}, the author uses the spanning property of frame elements to provide an important classification of norm-retrievable frames in $\mathbb{R}^N$. We present an alternative proof of this result as a consequence of Theorem \ref{thm_PD}.

\begin{theorem}\label{Fa}
	A frame $\{\varphi_i\}_{i=1}^M$ for $\mathbb{R}^N$ does norm retrieval if any only if for any partition $\{I_1, I_2\}$ of $[M]$, $[\spn\{\varphi_i\}_{i\in I_1}]^\perp\perp$ $[\spn\{\varphi_i\}_{i\in I_2}]^\perp$.
\end{theorem}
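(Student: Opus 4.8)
The plan is to derive Theorem~\ref{Fa} from the projection characterization in Theorem~\ref{thm_PD}, after reducing the frame statement to a statement about rank-one projections. For each $i$, let $P_i$ denote the orthogonal projection onto the line $\spn\{\varphi_i\}$, so that $P_i x = \frac{\langle x,\varphi_i\rangle}{\|\varphi_i\|^2}\varphi_i$ and $\|P_i x\| = |\langle x,\varphi_i\rangle|/\|\varphi_i\|$. This last identity shows that $\|P_i x\| = \|P_i y\|$ holds exactly when $|\langle x,\varphi_i\rangle| = |\langle y,\varphi_i\rangle|$, so the frame $\{\varphi_i\}_{i=1}^M$ does norm retrieval if and only if the projections $\{P_i\}_{i=1}^M$ do. The same computation yields the key identity $\spn\{P_i x\}_{i=1}^M = \spn\{\varphi_i : \langle x,\varphi_i\rangle \neq 0\}$, since $P_i x$ is a nonzero multiple of $\varphi_i$ precisely when $\langle x,\varphi_i\rangle \neq 0$ and vanishes otherwise.

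By the equivalence $(1)\Leftrightarrow(3)$ in Theorem~\ref{thm_PD}, norm retrieval is therefore equivalent to the condition that $x \in \spn\{\varphi_i : \langle x,\varphi_i\rangle \neq 0\}$ for every $x \in \mathbb{R}^N$, and it remains to show that this spanning condition is equivalent to the partition condition in the statement. Throughout I write $V_I = \spn\{\varphi_i\}_{i\in I}$ and use that $(V_I^\perp)^\perp = V_I$ in finite dimensions. For the direction from the partition condition to the spanning condition, given $x$ I would set $I_1 = \{i : \langle x,\varphi_i\rangle = 0\}$ and $I_2 = [M]\setminus I_1$. This is a partition of $[M]$, and by construction $x \in V_{I_1}^\perp$. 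The hypothesis $V_{I_1}^\perp \perp V_{I_2}^\perp$ then forces $x \perp V_{I_2}^\perp$, that is, $x \in (V_{I_2}^\perp)^\perp = V_{I_2} = \spn\{\varphi_i : \langle x,\varphi_i\rangle \neq 0\}$, which is exactly the spanning condition.

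The more interesting direction is the converse, where the spanning condition does the real work. Fix an arbitrary partition $\{I_1,I_2\}$ and take $u \in V_{I_1}^\perp$ and $v \in V_{I_2}^\perp$; the goal is $\langle u,v\rangle = 0$. Since $u \in V_{I_1}^\perp$, we have $\langle u,\varphi_i\rangle = 0$ for every $i \in I_1$, so the support $\{i : \langle u,\varphi_i\rangle \neq 0\}$ is contained in $I_2$. Applying the spanning condition to $u$ gives $u \in \spn\{\varphi_i : \langle u,\varphi_i\rangle \neq 0\} \subseteq V_{I_2}$, and because $v \in V_{I_2}^\perp$ we conclude $\langle u,v\rangle = 0$. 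As $u$ and $v$ were arbitrary, $V_{I_1}^\perp \perp V_{I_2}^\perp$, completing the equivalence and hence the proof.

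I expect the main obstacle to be not any single calculation but setting up the reduction cleanly: identifying $\spn\{P_i x\}_{i=1}^M$ with the span of the frame vectors on which $x$ has nonzero inner product, and then recognizing in the converse that the membership $u \in V_{I_2}$ is essentially free once one notes that the support of $u$ lies in $I_2$. The frame (spanning) hypothesis enters only to guarantee that $\{P_i\}$ acts on all of $\mathbb{R}^N$ and makes the degenerate partitions harmless; once these observations are in place, both directions are short.
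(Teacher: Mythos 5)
Your proof is correct and follows essentially the same route as the paper: both reduce to the rank-one projections $P_i$ and invoke Theorem~\ref{thm_PD}, with the same support partition $\{i : \langle x,\varphi_i\rangle = 0\}$ and the same identification $\spn\{P_i x\}_{i=1}^M = \spn\{\varphi_i : \langle x,\varphi_i\rangle \neq 0\}$. The only cosmetic difference is that you use equivalence $(1)\Leftrightarrow(3)$ of Theorem~\ref{thm_PD} in both directions, while the paper uses $(3)$ for one direction and the trivially equivalent $(2)$ for the other.
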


\begin{proof} Let $P_i$ be the orthogonal projection onto $\spn\{\varphi_i\}$ for $i\in [M]$. Suppose that $\{\varphi_i\}_{i=1}^M$ does norm retrieval in $\mathbb{R}^N$. Let $\{I_1, I_2\}$ be an arbitrary partition of $[M]$. For any $x\in [\spn\{\varphi_i\}_{i\in I_1}]^\perp$, Theorem \ref{thm_PD} gives
	\[x\in \spn\{P_ix\}_{i=1}^M=\spn\{P_ix\}_{i\in I_2}\subset \spn\{\varphi_i\}_{i\in I_2}.\] Thus, we conclude that $[\spn\{\varphi_i\}_{i\in I_1}]^\perp\subset \spn\{\varphi_i\}_{i\in I_2}$, completing the proof in one direction.
	
	Conversely, suppose that $[\spn\{\varphi_i\}_{i\in I_1}]^\perp\perp[\spn\{\varphi_i\}_{i\in I_2}]^\perp$ for any partition $\{I_1, I_2\}$ of $[M]$. Let $x\in \mathbb{R}^N$ and $y\in [\spn\{P_ix\}_{i=1}^M]^\perp$. Define the index set
	\[I:=\{i\in [M]: P_ix=0\}.\] Then $x \perp \varphi_i$ for all $i\in I$ and $y\in[\spn\{P_ix\}_{i\in I^c}]^\perp=[\spn\{\varphi_i\}_{i\in I^c}]^\perp.$ By assumption, $$[\spn\{\varphi_i\}_{i\in I}]^\perp\perp[\spn\{\varphi_i\}_{i\in I^c}]^\perp,$$ which implies $y\perp x$. By Theorem \ref{thm_PD}, $\{\varphi_i\}_{i=1}^M$ does norm retrieval.
\end{proof}

\begin{proposition}\label{Pro_span_ortho} Suppose that $\{\varphi_i\}_{i=1}^M$ does norm retrieval in $\mathbb{R}^N$. Then for every $j\in [M]$, if $\varphi_j\notin\spn_{i\not=j}\{\varphi_i\}$, then $\varphi_j$ is orthogonal to all other $\varphi_i$. Consequently, a basis does norm retrieval in $\mathbb{R}^N$ if and only if it is an orthogonal basis.
\end{proposition}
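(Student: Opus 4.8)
The plan is to derive everything from the partition characterization of norm retrieval in Theorem~\ref{Fa}. First I would observe that a family doing norm retrieval must span $\mathbb{R}^N$: otherwise some nonzero $x$ is orthogonal to every $\varphi_i$, and then $|\langle x,\varphi_i\rangle|=0=|\langle 0,\varphi_i\rangle|$ for all $i$ while $\|x\|\neq\|0\|$, contradicting norm retrieval. Hence $\{\varphi_i\}_{i=1}^M$ is a frame and Theorem~\ref{Fa} is applicable.

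To prove the main claim, fix $j\in[M]$ with $\varphi_j\notin V$, where $V:=\spn\{\varphi_i : i\neq j\}$, and apply Theorem~\ref{Fa} to the partition $I_1=\{j\}$, $I_2=[M]\setminus\{j\}$. This yields $[\spn\{\varphi_j\}]^\perp\perp V^\perp$. I would then rewrite this orthogonality as the containment $V^\perp\subseteq\bigl([\spn\{\varphi_j\}]^\perp\bigr)^\perp=\spn\{\varphi_j\}$. Since $\varphi_j\notin V$, the subspace $V$ is proper, so $V^\perp\neq\{0\}$; being a nonzero subspace of the line $\spn\{\varphi_j\}$, it must equal $\spn\{\varphi_j\}$. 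Therefore $\varphi_j\in V^\perp$, which is exactly the statement that $\varphi_j\perp\varphi_i$ for every $i\neq j$.

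For the consequence, the forward direction is immediate: if a basis $\{\varphi_i\}_{i=1}^N$ does norm retrieval, then linear independence gives $\varphi_j\notin\spn\{\varphi_i : i\neq j\}$ for each $j$, so the main claim forces $\varphi_j\perp\varphi_i$ for all $i\neq j$, and the basis is orthogonal. For the reverse direction, an orthogonal basis $\{\varphi_i\}_{i=1}^N$ satisfies $\|x\|^2=\sum_{i=1}^N |\langle x,\varphi_i\rangle|^2/\|\varphi_i\|^2$, so $\|x\|$ is determined by the magnitudes $|\langle x,\varphi_i\rangle|$ alone and norm retrieval follows.

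I expect the main (though modest) obstacle to be the bookkeeping around the symmetric relation $A^\perp\perp B^\perp$: the key move is to turn it into the one-sided containment $V^\perp\subseteq\spn\{\varphi_j\}$ and then to exploit both the one-dimensionality of $\spn\{\varphi_j\}$ and the hypothesis $\varphi_j\notin V$ to conclude $V^\perp=\spn\{\varphi_j\}$ exactly, rather than merely a containment. Verifying the spanning precondition, so that Theorem~\ref{Fa} is legitimately invoked, is the only other point requiring care.
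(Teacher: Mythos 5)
Your proof is correct and takes essentially the same route as the paper: both invoke Theorem~\ref{Fa} with the partition $\{j\}$ versus $[M]\setminus\{j\}$, use the spanning property (which the paper also notes follows from norm retrieval) together with $\varphi_j\notin\spn\{\varphi_i : i\neq j\}$ to see that $[\spn\{\varphi_i : i\neq j\}]^\perp$ is exactly the line $\spn\{\varphi_j\}$, and conclude $\varphi_j\perp\varphi_i$ for all $i\neq j$. Your spelled-out treatment of the basis corollary (including the reverse direction via $\|x\|^2=\sum_i |\langle x,\varphi_i\rangle|^2/\|\varphi_i\|^2$) simply makes explicit what the paper leaves as ``the conclusion follows.''
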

\begin{proof}
	Since $\{\varphi_i\}_{i=1}^M$ does norm retrieval, it spans $\mathbb{R}^N$. Therefore, if $\varphi_j\notin\spn_{i\not=j}\{\varphi_i\}$, then $\spn_{i\not=j}\{\varphi_i\}$ is a hyperplane in $\mathbb{R}^N$. By Theorem \ref{Fa}, this hyperplane is $\varphi_j^\perp$. The conclusion follows.
\end{proof}

It is known that both phase and norm retrieval are preserved when applying orthogonal projections to vectors. However, if the vectors do norm retrieval only for a subspace, this property may no longer hold. 

\begin{example}
	In $\mathbb{R}^3$, consider $$W=\spn\left\{u_1=\left(\dfrac{1}{\sqrt{2}}, 0, \dfrac{1}{\sqrt{2}}\right), u_2=\left(-\dfrac{1}{\sqrt{6}}, \dfrac{2}{\sqrt{6}}, \dfrac{1}{\sqrt{6}}\right)\right\}.$$
	Let $\{e_i\}_{i=1}^3$ be the canonical orthonormal basis for $\mathbb{R}^3$ and let $P$ be the orthogonal projection onto $\spn\{e_1, e_2\}$. Then
	\[Pu_1=\left(\dfrac{1}{\sqrt{2}}, 0, 0\right), \quad Pu_2=\left(-\dfrac{1}{\sqrt{6}}, \dfrac{2}{\sqrt{6}}, 0\right).\]
	Since $u_1$ and $u_2$ are orthogonal, they do norm retrieval on $W$. However, $Pu_1$ and $Pu_2$ are not orthogonal, so they do not do norm retrieval on $P(W)=\spn\{e_1, e_2\}$.
	
	Note that $P: W\to P(W)$ is invertible. Therefore, this example also shows that norm retrieval is not preserved under invertible operators.
\end{example}
The following important classification is from \cite{Ca}.
\begin{theorem}[\cite{Ca}]\label{T11}
	Let $\{P_i\}_{i=1}^M$ be projections onto subspaces $\{W_i\}_{i=1}^M$ of $\mathbb{R}^N$. The
	following are equivalent:
	
	\begin{enumerate}
		\item $\{P_i\}_{i=1}^M$ does norm retrieval.
		\item Given any orthonormal basis $\{\varphi_{ij}\}_{j=1}^{N_i}$ of $W_i$ and any subcollection $S\subset
		\{(i,j):1\le i\le M,1\le j\le N_i\}$ then,
		\[ \spn \{\varphi_{ij}\}^{\perp}_{(i,j)\in S} \perp \spn  \{\varphi_{ij}\}^{\perp}_{(i,j)\in S^{c}}.\]
		\item For any orthonormal basis $\{\varphi_{ij}\}_{i=1}^{N_i}$ of $W_i$, the vectors $\{\varphi_{ij}\}_{i=1, j=1}^{M,\ N_i}$ do norm retrieval.
	\end{enumerate}
\end{theorem}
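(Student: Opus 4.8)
The plan is to treat condition (3) as the hub, proving $(1)\Leftrightarrow(3)$ directly and deducing $(2)\Leftrightarrow(3)$ from Theorem~\ref{Fa}; this parallels the phase-retrieval statement in Theorem~\ref{T21}. The single identity that drives the first equivalence relates a projection norm to the coordinates of its range in an orthonormal basis: if $\{\varphi_{ij}\}_{j=1}^{N_i}$ is an orthonormal basis of $W_i$, then for every $x\in\mathbb{R}^N$,
\[
\|P_i x\|^2 = \sum_{j=1}^{N_i} |\langle x, \varphi_{ij}\rangle|^2 .
\]
I would use this to pass from vector measurements to projection measurements in one direction, and pair it with Lemma~\ref{L4} in the other.

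For $(1)\Rightarrow(3)$, fix orthonormal bases $\{\varphi_{ij}\}_j$ of the $W_i$ and suppose $x,y$ satisfy $|\langle x,\varphi_{ij}\rangle| = |\langle y,\varphi_{ij}\rangle|$ for all $i,j$. Summing squares over $j$ and applying the displayed identity gives $\|P_i x\| = \|P_i y\|$ for every $i$, so $(1)$ forces $\|x\|=\|y\|$ and the vectors do norm retrieval. The reverse implication $(3)\Rightarrow(1)$ is where Lemma~\ref{L4} is essential. Given $x,y$ with $\|P_i x\| = \|P_i y\|$ for all $i$, I apply Lemma~\ref{L4} inside each $W_i$ to the pair $P_i x, P_i y\in W_i$, which have equal norms, to produce an orthonormal basis $\{\varphi_{ij}\}_j$ of $W_i$ with $|\langle P_i x,\varphi_{ij}\rangle| = |\langle P_i y,\varphi_{ij}\rangle|$. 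Since $\varphi_{ij}\in W_i$, we have $\langle P_i x,\varphi_{ij}\rangle = \langle x,\varphi_{ij}\rangle$, so the combined family satisfies $|\langle x,\varphi_{ij}\rangle| = |\langle y,\varphi_{ij}\rangle|$ for all $i,j$. Because $(3)$ asserts norm retrieval for \emph{every} such basis, in particular for the one just built, we conclude $\|x\|=\|y\|$.

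For $(2)\Leftrightarrow(3)$, fix any orthonormal basis $\{\varphi_{ij}\}$ of the $W_i$ and regard the whole family as a set of vectors indexed by pairs $(i,j)$. A subcollection $S$ together with its complement $S^c$ is exactly a partition of this index set, and the orthogonality statement in $(2)$ is verbatim the partition condition of Theorem~\ref{Fa}. Taking $S$ to be the full index set forces $\spn\{\varphi_{ij}\}_S^\perp\perp\mathbb{R}^N$, hence the family spans $\mathbb{R}^N$ and is a frame; on the $(3)$ side norm retrieval already forces spanning. Thus Theorem~\ref{Fa} applies and yields the equivalence for each fixed basis, and since both $(2)$ and $(3)$ quantify universally over orthonormal bases, this gives $(2)\Leftrightarrow(3)$.

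The main obstacle is the direction $(3)\Rightarrow(1)$: its hypothesis $\|P_i x\|=\|P_i y\|$ is genuinely weaker than the coordinatewise hypothesis $|\langle x,\varphi_{ij}\rangle|=|\langle y,\varphi_{ij}\rangle|$ needed to invoke $(3)$, since many distinct coordinate vectors share the same norm. The resolution is to let the orthonormal basis depend on $x$ and $y$: Lemma~\ref{L4} manufactures, within each $W_i$, a basis in which $P_i x$ and $P_i y$ have matching coordinate moduli, and the universal quantifier in $(3)$ is precisely what licenses this $x,y$-dependent choice. I would also flag that this is the one place the argument leans on the real structure, through Lemma~\ref{L4}.
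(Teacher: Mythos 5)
Your proof is correct, and it is worth noting at the outset that the paper itself never proves Theorem \ref{T11}: the result is imported from \cite{Ca} and stated without argument, so there is no in-paper proof to match against. What you have produced is a self-contained derivation assembled from the paper's own toolkit, and the assembly is sound. Your $(1)\Leftrightarrow(3)$ is, step for step, the norm-retrieval analogue of the paper's proof of Theorem \ref{T21}: the easy direction by summing $|\langle x,\varphi_{ij}\rangle|^2$ over $j$ to recover $\|P_ix\|$, and the hard direction by letting Lemma \ref{L4} manufacture an $(x,y)$-dependent orthonormal basis inside each $W_i$ — you correctly identify the universal quantifier in (3) as what licenses this choice, and the identity $\langle P_i x,\varphi_{ij}\rangle=\langle x,\varphi_{ij}\rangle$ (self-adjointness of $P_i$ together with $\varphi_{ij}\in W_i$) is exactly the right glue. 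Your $(2)\Leftrightarrow(3)$ via Theorem \ref{Fa} is also clean: the subcollection condition in (2) is verbatim the partition condition of Theorem \ref{Fa}, and you correctly supply the frame hypothesis that Theorem \ref{Fa} requires on each side (taking $S$ to be the full index set forces spanning under (2); under (3), a nonzero vector orthogonal to all $\varphi_{ij}$ would have the same measurements as $0$, so norm retrieval forces spanning). There is also no circularity: the paper proves Theorem \ref{Fa} from Theorem \ref{thm_PD}, and Lemma \ref{L4} is independent, so nothing upstream relies on Theorem \ref{T11}. One small correction to your closing remark: Lemma \ref{L4} is \emph{not} where the real structure enters — the paper states and proves it for both real and complex $\mathcal{H}^N$ (and uses it for the complex case of Theorem \ref{T21}). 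The genuinely real ingredient is Theorem \ref{Fa}, whose proof via Theorem \ref{thm_PD} rests on the polarization trick $u=x+y$, $v=x-y$ with $\|u\|=\|v\|$ iff $x\perp y$, which fails over $\mathbb{C}$; this is why (2) is a real-space condition while your $(1)\Leftrightarrow(3)$ argument would survive in the complex setting.
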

It is necessary in (3) of Theorem \ref{T11} that this holds for every orthonormal basis of the $W_i$.

\begin{example}
	In $\mathbb{R}^3$ let $\{e_i\}_{i=1}^3$ be the canonical orthonormal basis and let $W_1=\spn\{e_1,e_3\}$ and $W_2=\spn\{e_2,e_3\}$.
	Let $x=(0,0,1)$ and $y=(1,1,0)$. Then
	\[ \|P_1x\|=\|P_1y\|=\|P_2x\|=\|P_2y\|=1.\]
	But $\|x\|=1$ and $\|y\|=\sqrt{2}\not= \|x\|$.
	However, the collection of vectors $\{e_1, e_3, e_2, e_3\}$ obviously does norm retrieval. 
\end{example}

In \cite{Ca}, the authors show that if $\{W_i\}_{i=1}^M$ does norm retrieval for $\mathbb{R}^N$, then $\sum_{i=1}^{M}\dim(W_i)\geq N$. We now show that if equality holds, then these subspaces must be pairwise orthogonal.

\begin{theorem}\label{pair_orth}
	Let $\{W_i\}_{i=1}^M$ be subspaces of $\mathbb{R}^N$ such that $\sum_{i=1}^{M}\dim(W_i)=N$. Then $\{W_i\}_{i=1}^M$ does norm retrieval if and only if these subspaces are pairwise orthogonal. 
\end{theorem}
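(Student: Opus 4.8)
The plan is to prove the two implications separately, treating the direction ``pairwise orthogonal $\Rightarrow$ norm retrieval'' as the routine one and reducing the converse to the vector-level results already established in the excerpt.

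For the direction that pairwise orthogonality implies norm retrieval, I would first note that since the $W_i$ are pairwise orthogonal and $\sum_{i=1}^M \dim(W_i) = N$, the space decomposes as an orthogonal direct sum $\mathbb{R}^N = \bigoplus_{i=1}^M W_i$. Consequently every $x \in \mathbb{R}^N$ satisfies $x = \sum_{i=1}^M P_i x$ with mutually orthogonal summands, so that $\|x\|^2 = \sum_{i=1}^M \|P_i x\|^2$. The right-hand side depends only on the projection norms, hence $\|x\|$ is recovered and norm retrieval holds.

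For the converse, suppose $\{W_i\}_{i=1}^M$ does norm retrieval. I would fix an orthonormal basis $\{\varphi_{ij}\}_{j=1}^{N_i}$ of each $W_i$ (with $N_i = \dim W_i$) and apply the equivalence $(1)\Leftrightarrow(3)$ of Theorem \ref{T11} to conclude that the combined collection $\{\varphi_{ij}\}$ does norm retrieval as a family of vectors. The crucial counting point is that this collection has exactly $\sum_{i=1}^M N_i = N$ members. Since any norm-retrieval family must span $\mathbb{R}^N$ (otherwise a nonzero vector orthogonal to all of them would have the same projection data as the zero vector but a different norm), these $N$ spanning vectors form a basis. Applying Proposition \ref{Pro_span_ortho}, this basis must in fact be an orthogonal basis.

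It then remains to translate orthogonality of the combined basis back into orthogonality of the subspaces. Because each block $\{\varphi_{ij}\}_{j=1}^{N_i}$ is already orthonormal inside $W_i$, the orthogonality newly forced by Proposition \ref{Pro_span_ortho} is exactly the orthogonality across distinct blocks, namely $\langle \varphi_{ij}, \varphi_{kl}\rangle = 0$ whenever $i \neq k$. As these blocks span $W_i$ and $W_k$ respectively, this yields $W_i \perp W_k$ for all $i \neq k$, completing the converse. The one step demanding care, and the main obstacle, is the appeal to Theorem \ref{T11}: I must use that norm retrieval of the projections transfers to the chosen orthonormal bases, and then combine this with the tight dimension identity $\sum_i N_i = N$ to upgrade the conclusion from a mere spanning family to a basis, which is precisely what makes Proposition \ref{Pro_span_ortho} applicable.
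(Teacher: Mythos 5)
Your proposal is correct and follows essentially the same route as the paper: the converse is verbatim the paper's argument (transfer to combined orthonormal bases via Theorem \ref{T11}, use the dimension count $\sum_i N_i = N$ to upgrade spanning to a basis, invoke Proposition \ref{Pro_span_ortho}, then read off cross-block orthogonality). Your only deviation is in the easy direction, where you compute $\|x\|^2 = \sum_{i=1}^M \|P_i x\|^2$ directly from the orthogonal decomposition instead of citing Theorem \ref{T11} as the paper does -- a harmless, slightly more self-contained variant.
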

\begin{proof}
	Let $\{\varphi_{ij}\}_{j=1}^{N_i}$ be any orthonormal basis of $W_i, i\in [M]$. If the subspaces $\{W_i\}_{i=1}^M$ are pairwise orthogonal, then the collection of all vectors $\{\varphi_{ij}\}_{i=1, j=1}^{M,\ N_i}$ forms an orthonormal basis for $\mathbb{R}^N$ and hence does norm retrieval. By Theorem \ref{T11}, $\{W_i\}_{i=1}^M$ does norm retrieval.
	
	Conversely, if $\{W_i\}_{i=1}^M$ does norm retrieval, then the collection of vectors $\{\varphi_{ij}\}_{i=1, j=1}^{M,\ N_i}$ does norm retrieval and hence spans $\mathbb{R}^N$. Since $\sum_{i=1}^{M}\dim(W_i)=N$, it follows that these vectors form a basis for $\mathbb{R}^N$. By Proposition \ref{Pro_span_ortho}, this basis must be orthonormal. Therefore, the subspaces $\{W_i\}_{i=1}^M$ are pairwise orthogonal.
\end{proof}

Regarding phase retrieval, we have the following theorem.

	\begin{theorem}
	Let $\{W_i\}_{i=1}^M$ be subspaces of $\mathbb{R}^N$ which do phase retrieval and assume
	\[ \sum_{i=1}^M\dim(W_i)=2N-1.\]
	Then $W_i\cap W_j=\{0\}$, for all $i\not= j$.
\end{theorem}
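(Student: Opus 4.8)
The plan is to reduce the statement to a numerical contradiction obtained from two opposing bounds on the quantity $\dim W_i + \dim W_j$. Writing $N_i = \dim W_i$, so that $\sum_{i=1}^M N_i = 2N-1$, I would first fix an orthonormal basis $\{\varphi_{ij}\}_{j=1}^{N_i}$ of each $W_i$. By Theorem \ref{T21}, the resulting family $\{\varphi_{ij}\}$ of $2N-1$ vectors does phase retrieval, hence by Theorem \ref{com} it has the complement property; since it consists of exactly $2N-1$ vectors, the complement property makes it full spark, as recalled in the preliminaries. This full spark property is what drives the argument, and I would assume for contradiction that $W_i \cap W_j \neq \{0\}$ for some $i \neq j$, setting $d = \dim(W_i \cap W_j) \geq 1$.

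For the lower bound, the union of the chosen orthonormal bases of $W_i$ and $W_j$ consists of $N_i + N_j$ vectors spanning $W_i + W_j$, whose dimension is $N_i + N_j - d < N_i + N_j$; hence these vectors are linearly dependent. Since every linearly dependent subset of a full spark family has at least $N+1$ elements, this gives $N_i + N_j \geq N+1$.

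For the upper bound, I would pass to the projection picture and apply Edidin's characterization \cite{E}, according to which phase retrieval forces $\spn\{P_k x\}_{k=1}^M = \mathbb{R}^N$ for every nonzero $x$. Choosing $0 \neq x \in W_i \cap W_j$, we have $P_i x = P_j x = x$, while $P_k x \in W_k$ for every $k \neq i,j$. Therefore
\[ \mathbb{R}^N = \spn\{P_k x\}_{k=1}^M \subseteq \spn\{x\} + \sum_{k \neq i,j} W_k, \]
and comparing dimensions yields $N \leq 1 + \sum_{k\neq i,j} N_k = 2N - N_i - N_j$, that is $N_i + N_j \leq N$. Together with the lower bound $N_i + N_j \geq N+1$, this is the desired contradiction.

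I expect the main obstacle to be recognizing that neither characterization suffices on its own: full spark alone does not forbid small overlapping subspaces (two coincident lines in high dimension are full-spark-compatible), and Edidin's spanning condition alone does not forbid a nontrivial intersection. The crux of the proof is thus to extract the two inequalities $N_i+N_j \geq N+1$ and $N_i + N_j \leq N$ from the two different viewpoints — the full spark property in the vector picture, and the global spanning condition in the projection picture — and to play them against each other, where the hypothesis $\sum_i \dim(W_i) = 2N-1$ is exactly what calibrates both bounds.
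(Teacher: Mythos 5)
Your proof is correct, but it takes a genuinely different route from the paper's. The paper also starts from Theorem \ref{T21}, but with \emph{adapted} bases: given a unit vector $x \in W_i \cap W_j$, it picks orthonormal bases of $W_i$ and $W_j$ both containing $x$, so the combined family, which Theorem \ref{T21} guarantees does phase retrieval, has at most $2N-2$ distinct vectors --- directly contradicting the cardinality bound $M \geq 2N-1$, with no further machinery. You instead fix arbitrary bases and play two characterizations against each other: writing $N_i=\dim W_i$, phase retrieval of the $(2N-1)$-vector family plus the complement property (Theorem \ref{com}) forces full spark, whence the union of the two bases --- $N_i+N_j$ vectors spanning $W_i+W_j$ of dimension $N_i+N_j-\dim(W_i\cap W_j)$, hence dependent --- gives $N_i+N_j \geq N+1$; while Edidin's theorem applied to $0 \neq x \in W_i \cap W_j$, where $P_i x = P_j x = x$, gives $N \leq 1 + \sum_{k \neq i,j} N_k$, i.e., $N_i + N_j \leq N$. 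Both inequalities are correctly derived, so the contradiction stands. The paper's argument is shorter and uses a single tool; yours costs two extra theorems but buys a quantitative byproduct: the Edidin half shows that for \emph{any} family of subspaces doing phase retrieval with total dimension $T$, a nontrivial intersection forces $N_i + N_j \leq T - N + 1$, which specializes to the theorem at $T = 2N-1$ (the full spark half, by contrast, is genuinely tied to $T = 2N-1$). One aside in your closing paragraph is inaccurate: two coincident lines are \emph{not} full-spark-compatible, since their unit basis vectors agree up to sign and form a dependent pair of size two; the correct statement is that full spark alone tolerates nontrivial intersections only when $N_i + N_j \geq N+1$ (e.g., two generic hyperplanes in $\mathbb{R}^N$ for $N \geq 3$), which is precisely why your second inequality is needed. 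This slip is motivational only and does not affect the validity of the proof.
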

\begin{proof}
	If $x\in W_i\cap W_j$ is a unit vector, then we can choose orthonormal bases $\{\varphi_{ik}\}_{k=1}^{N_i}$ and $\{\varphi_{jk}\}_{k=1}^{N_j}$ for $W_i$ and $W_j$, respectively, such that $\varphi_{i1}=\varphi_{j1}=x$. Let $\{\varphi_{\ell k}\}_{k=1}^{N_\ell}$ be any orthonormal basis of $W_\ell$ for all $\ell\not = i, j$. By Theorem \ref{T21}, the collection of all vectors $\{\varphi_{\ell k}\}_{\ell=1, k=1}^{M,\ N_\ell}$ does phase retrieval. However, this collection contains at most $2N-2$ vectors, which contradicts the fact that at least $2N-1$ vectors are required for phase retrieval in $\mathbb{R}^N$.
\end{proof}
\begin{theorem} Let $\{W_i\}_{i=1}^M$ be subspaces of $\mathbb{R}^N$ allowing phase retrieval (respectively, norm retrieval). If $W_i=U_i\oplus V_i$ for all
	$i\in [M]$, then the family of subspaces $\{U_i\}_{i=1}^M \cup \{V_i\}_{i=1}^M$ does phase retrieval (respectively, norm retrieval) in $\mathbb{R}^N$.
\end{theorem}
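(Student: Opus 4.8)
The plan is to reduce the statement about the refined family $\{U_i\}_{i=1}^M\cup\{V_i\}_{i=1}^M$ back to the hypothesis on $\{W_i\}_{i=1}^M$ by means of the Pythagorean identity that the orthogonal decomposition $W_i=U_i\oplus V_i$ supplies. Throughout, I read $\oplus$ as an orthogonal direct sum, so that $U_i\perp V_i$ and $U_i+V_i=W_i$. Let $P_i$, $Q_i$, and $R_i$ denote the orthogonal projections of $\mathbb{R}^N$ onto $W_i$, $U_i$, and $V_i$, respectively.

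First I would record the key algebraic fact that $P_i=Q_i+R_i$ for each $i$. This follows by writing an arbitrary $z\in\mathbb{R}^N$ as $z=u+v+z'$ with $u\in U_i$, $v\in V_i$, and $z'\perp W_i$; since $V_i\perp U_i$ and $z'\perp U_i$, one gets $Q_i z=u$, and symmetrically $R_i z=v$, whence $P_i z=u+v=Q_i z+R_i z$. Because $Q_i z\in U_i$ and $R_i z\in V_i$ are orthogonal, the Pythagorean relation
\[
\|P_i z\|^2=\|Q_i z\|^2+\|R_i z\|^2
\]
holds for every $z\in\mathbb{R}^N$ and every $i\in[M]$.

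With this identity in hand, the two cases are immediate. For the phase-retrieval statement, suppose $x,y\in\mathbb{R}^N$ satisfy $\|Q_i x\|=\|Q_i y\|$ and $\|R_i x\|=\|R_i y\|$ for all $i$ --- this is exactly the hypothesis that defines phase retrieval for the family $\{U_i\}_{i=1}^M\cup\{V_i\}_{i=1}^M$. Adding the squared equalities and applying the Pythagorean relation gives $\|P_i x\|^2=\|P_i y\|^2$ for all $i$, so that $\{W_i\}_{i=1}^M$ doing phase retrieval forces $x=cy$ with $|c|=1$. The norm-retrieval statement is identical word for word, except that the hypothesis on $\{W_i\}_{i=1}^M$ now yields only $\|x\|=\|y\|$.

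Since the argument is so direct, there is no genuinely hard step; the only point that requires care is the orthogonality of the decomposition, which is precisely what makes $P_i=Q_i+R_i$ and the Pythagorean identity valid. If $W_i=U_i\oplus V_i$ were allowed to be a non-orthogonal direct sum, then $Q_i+R_i$ need not equal $P_i$ and the reduction would break down, so I would make the orthogonal-sum convention explicit at the outset.
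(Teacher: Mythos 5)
Your proof is correct and takes essentially the same route as the paper: both decompose $P_i x = Q_i x + R_i x$ with $Q_i x \perp R_i x$, apply the Pythagorean identity to get $\|P_i x\| = \|P_i y\|$, and then invoke the hypothesis on $\{W_i\}_{i=1}^M$. Your only addition is an explicit verification that $P_i = Q_i + R_i$ (and the remark that $\oplus$ must be read as an orthogonal sum), which the paper's proof uses implicitly.
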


\begin{proof} Let $P_i, Q_i$, and $R_i$ be the orthogonal projections onto $W_i, U_i$, and $V_i$, respectively, for $i \in [M]$. 
	By assumption, $Q_ix\perp R_ix$ for all $i\in [M]$. Hence, if
	\[ \|Q_ix\|=\|Q_iy\|\mbox{ and } \|R_ix\|=\|R_iy\| \mbox{ for all } i\in [M],\]
	then
	\[ \|P_ix\|^2=\|Q_ix+R_ix\|^2= \|Q_ix\|^2+\|R_ix\|^2=\|Q_iy\|^2+\|R_iy\|^2 =\|P_iy\|^2.\]
	The result follows immediately.
\end{proof}
\begin{remark}
	The converse of the theorem above is false. For example, let 
	$$W_1=\spn\{e_1\}, W_2=\spn\{e_2\}, W_3=\spn\{e_3\}, W_4=\spn\{e_3\},$$ where $\{e_i\}_{i=1}^3$ is the canonical orthonormal basis for $\mathbb{R}^3$.
	Then $\{W_i\}_{i=1}^4$ does norm retrieval but $\{W_1\oplus W_3, W_2\oplus W_4\}$ fails norm retrieval.
\end{remark}

The following theorem completely classifies when two proper subspaces perform norm retrieval.

\begin{theorem}
	Two proper subspaces of $\mathbb{R}^N$ do norm retrieval if and only if they are the orthogonal complements of each other. 
\end{theorem}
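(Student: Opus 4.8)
The plan is to prove both implications, treating the forward direction as a one-line computation and concentrating effort on the converse. For the ``if'' direction, suppose $W_2 = W_1^\perp$. Since $\mathbb{R}^N = W_1 \oplus W_1^\perp$ is an orthogonal decomposition, every $x$ satisfies $\|x\|^2 = \|P_1 x\|^2 + \|P_2 x\|^2$, so the projection norms determine $\|x\|$ and norm retrieval follows. (Equivalently, $x = P_1 x + P_2 x \in \spn\{P_1 x, P_2 x\}$, so criterion (3) of Theorem \ref{thm_PD} holds.)

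For the converse I would work throughout with criterion (3) of Theorem \ref{thm_PD}: the pair $\{W_1, W_2\}$ does norm retrieval if and only if $x \in \spn\{P_1 x, P_2 x\}$ for every $x \in \mathbb{R}^N$. The first step is to feed this criterion the vectors of $W_1^\perp$: if $x \in W_1^\perp$ then $P_1 x = 0$, hence $x \in \spn\{P_2 x\} \subseteq W_2$. This yields the containment $W_1^\perp \subseteq W_2$, equivalently $W_2^\perp \subseteq W_1$.

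The second step converts this containment into an explicit orthogonal splitting of $\mathbb{R}^N$. Set $A = W_1 \cap W_2$, $B = W_2^\perp$, and $C = W_1^\perp$. The containments give $B \subseteq W_1$, $C \subseteq W_2$, and $W_1^\perp \cap W_2^\perp \subseteq W_2 \cap W_2^\perp = \{0\}$, so $W_1 + W_2 = \mathbb{R}^N$. A short check then shows $A, B, C$ are pairwise orthogonal with $\dim A + \dim B + \dim C = (\dim W_1 + \dim W_2 - N) + (N - \dim W_2) + (N - \dim W_1) = N$; hence $\mathbb{R}^N = A \oplus B \oplus C$ orthogonally, $W_1 = A \oplus B$, and $W_2 = A \oplus C$. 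Consequently, writing $x = x_A + x_B + x_C$, we have $P_1 x = x_A + x_B$ and $P_2 x = x_A + x_C$.

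The final step applies criterion (3) to a well-chosen test vector. If all three of $A, B, C$ were nontrivial, pick nonzero $a \in A$, $b \in B$, $c \in C$ and set $x = a + b + c$; since $a,b,c$ lie in mutually orthogonal subspaces they are linearly independent, and matching components in $x = \alpha(a + b) + \beta(a + c)$ forces $\alpha = 1$ from the $B$-part and $\beta = 1$ from the $C$-part, yet $\alpha + \beta = 1$ from the $A$-part, a contradiction. Thus one of $A, B, C$ must vanish. Now $B = W_2^\perp = \{0\}$ would give $W_2 = \mathbb{R}^N$ and $C = W_1^\perp = \{0\}$ would give $W_1 = \mathbb{R}^N$, both excluded since the subspaces are proper; hence $A = W_1 \cap W_2 = \{0\}$ and therefore $W_2 = A \oplus C = W_1^\perp$. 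I expect the main obstacle to be the bookkeeping in the second step, namely verifying that the single containment $W_1^\perp \subseteq W_2$ already forces the clean orthogonal decomposition $\mathbb{R}^N = A \oplus B \oplus C$ (without separately assuming the projections commute), since it is precisely this decomposition that makes $P_1 x = x_A + x_B$, $P_2 x = x_A + x_C$ hold and lets the test-vector computation succeed.
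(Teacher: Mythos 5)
Your proof is correct, but it takes a genuinely different route from the paper's. The paper argues by contradiction on dimension: assuming $\dim W_1+\dim W_2>N$, it picks an orthonormal basis of $W_1\cap W_2$, extends it to orthonormal bases of $W_1$ and $W_2$, invokes Theorem \ref{T11} to pass to vector norm retrieval, uses Proposition \ref{Pro_span_ortho} to force the combined basis vectors to be orthonormal, and then exhibits an explicit pair $x=2e_1+u_{k+1}+v_{k+1}$, $y=e_1+2u_{k+1}+2v_{k+1}$ with equal projection norms ($\sqrt5$) but $\|x\|=\sqrt6\neq 3=\|y\|$; finally it closes with Theorem \ref{pair_orth}. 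You instead work entirely at the projection level with criterion (3) of Theorem \ref{thm_PD}: feeding $x\in W_1^\perp$ into the criterion gives $W_1^\perp\subseteq W_2$ (this is essentially the subspace analogue of the containment trick in the proof of Theorem \ref{Fa}), from which you correctly deduce $W_1+W_2=\mathbb{R}^N$, the orthogonal splitting $\mathbb{R}^N=A\oplus B\oplus C$ with $A=W_1\cap W_2$, $B=W_2^\perp\subseteq W_1$, $C=W_1^\perp\subseteq W_2$, and the formulas $P_1x=x_A+x_B$, $P_2x=x_A+x_C$; your test vector $a+b+c$ then kills the case $A\neq\{0\}$ by the incompatible coefficient equations $\alpha=1$, $\beta=1$, $\alpha+\beta=1$. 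I checked the details you flagged as the main obstacle: the pairwise orthogonality of $A,B,C$ and the dimension count do follow from $W_1^\perp\subseteq W_2$ alone, with no commutativity of the projections needed. What each approach buys: yours is self-contained modulo Theorem \ref{thm_PD}, bypasses Theorems \ref{T11}, \ref{pair_orth} and Proposition \ref{Pro_span_ortho} entirely, and yields structural information (the explicit decomposition showing exactly where $x\in\spn\{P_1x,P_2x\}$ fails when $W_1\cap W_2\neq\{0\}$); the paper's version produces a concrete numerical witness of failed norm retrieval and showcases the reduction-to-vectors machinery that the rest of the paper develops.
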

\begin{proof}
	Suppose that $W_1, W_2$ are proper subspaces of $\mathbb{R}^N$ with corresponding orthogonal projections $P_1, P_2$. 
	If they are orthogonal complements of each other, then it is clear that they do norm retrieval.
	
	Now we suppose that $\{W_1, W_2\}$ does norm retrieval. Then we must have $\dim(W_1)+\dim(W_2)\geq N$. Assume, toward a contradiction, that $\dim(W_1)+\dim(W_2)=p+q>N$. Then $k:=\dim(W_1\cap W_2)>0$. Let $\{e_1, e_2, \ldots, e_k\}$ be an orthonormal basis for $W_1\cap W_2$, and extend it to orthonormal bases $\{e_1, \ldots, e_k, u_{k+1}, \ldots, u_p\}$ for $W_1$ and $\{e_1, \ldots, e_k, v_{k+1}, \ldots, v_q\}$ for $W_2$.
	
	Since $\{W_1, W_2\}$ does norm retrieval, it follows that the set of vectors $\{e_1, \ldots, e_k, u_{k+1}, \ldots, u_p, v_{k+1}, \ldots, v_q\}$ does norm retrieval in $\mathbb{R}^N$. Therefore, by Proposition \ref{Pro_span_ortho}, this set must be an orthonormal basis for $\mathbb{R}^N$. 
	
	Let 
	\[x=2e_1+u_{k+1}+v_{k+1} \mbox{ and } y = e_1+2u_{k+1}+2v_{k+1}.\] Then we have 
	\[\|P_1x\|=\|2e_1+u_{k+1}\|=\sqrt{5}=\|e_1+2u_{k+1}\|=\|P_1y\|, \] and 
	\[\|P_2x\|=\|2e_1+v_{k+1}\|=\sqrt{5}=\|e_1+2v_{k+1}\|=\|P_2y\|.\] But 
	\[\|x\|=\sqrt{6}\not=\|y\|=3.\] This contradicts the fact that $\{W_1, W_2\}$ does norm retrieval.
	
	Thus, we have shown that $\dim(W_1)+\dim(W_2)=N$. The conclusion then follows by Theorem \ref{pair_orth}. 
\end{proof}

In \cite{A}, the authors prove that the collection of norm-retrievable frames in $\mathbb{R}^N$ is not dense in the set of all $M$-element frames, where $M < 2N-1$.

\begin{theorem}[\cite{A}]
	Let $\{\varphi_i\}_{i=1}^M$ be a frame for $\mathbb{R}^N$ with $M< 2N-1$ which fails norm retrieval. Then there exists an $\epsilon>0$ such that whenever
	$\{\psi_i\}_{i=1}^M$ are vectors satisfying
	\[ \sum_{i=1}^M\|\psi_i-\varphi_i\|< \epsilon,\]
	then $\{\psi_i\}_{i=1}^M$ also fails norm retrieval.
	
\end{theorem}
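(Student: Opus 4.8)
The plan is to detect the failure of norm retrieval by a witness and then show that such a witness survives small perturbations. By Theorem~\ref{Fa} (equivalently, via the substitution $u=x+y$, $v=x-y$, under which $|\langle x,\varphi_i\rangle|=|\langle y,\varphi_i\rangle|$ becomes $\langle u,\varphi_i\rangle\langle v,\varphi_i\rangle=0$ and $\|x\|=\|y\|$ becomes $\langle u,v\rangle=0$), the family $\{\varphi_i\}_{i=1}^M$ fails norm retrieval precisely when there is a partition $\{I,I^c\}$ of $[M]$ together with vectors $u\in[\spn\{\varphi_i\}_{i\in I}]^\perp$ and $v\in[\spn\{\varphi_i\}_{i\in I^c}]^\perp$ with $\langle u,v\rangle\neq0$. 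First I would fix such a partition and unit witnesses $u,v$ with $\langle u,v\rangle=\delta\neq0$, supplied by the assumed failure of $\{\varphi_i\}_{i=1}^M$.

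The strategy is then to produce, for every sufficiently close family $\{\psi_i\}_{i=1}^M$, perturbed witnesses $u',v'$ with $u'\in[\spn\{\psi_i\}_{i\in I}]^\perp$, $v'\in[\spn\{\psi_i\}_{i\in I^c}]^\perp$, and $\langle u',v'\rangle\neq0$; by Theorem~\ref{Fa} this forces $\{\psi_i\}_{i=1}^M$ to fail norm retrieval through the same partition. To build $u'$ I would solve the homogeneous system $\langle u',\psi_i\rangle=0$ for $i\in I$, seeking a solution near $u$. Since $u\perp\varphi_i$ and $\psi_i\to\varphi_i$, the vector $u$ is already nearly orthogonal to each $\psi_i$, so as long as $[\spn\{\psi_i\}_{i\in I}]^\perp$ stays close to $[\spn\{\varphi_i\}_{i\in I}]^\perp$, projecting $u$ onto it and renormalizing gives $u'\to u$; the same construction gives $v'\to v$, whence $\langle u',v'\rangle\to\delta\neq0$ and the argument closes. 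The role of the hypothesis here is a crude dimension count: because $M\le 2N-2$, one has $\dim[\spn\{\psi_i\}_{i\in I}]^\perp+\dim[\spn\{\psi_i\}_{i\in I^c}]^\perp\ge 2N-M\ge 2$ for \emph{every} choice of the $\psi_i$, so there is always room for a nontrivial family of candidate witnesses to chase.

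The main obstacle is exactly the continuity of these orthogonal complements under perturbation. The dimension of $\spn\{\psi_i\}_{i\in I}$ is only lower semicontinuous: if the block $\{\varphi_i\}_{i\in I}$ is linearly dependent, an arbitrarily small perturbation can raise its rank, so $[\spn\{\psi_i\}_{i\in I}]^\perp$ may be strictly smaller than $[\spn\{\varphi_i\}_{i\in I}]^\perp$ and the original witness $u$ can be lost. Thus the naive projection of the previous paragraph works verbatim only when $\rk\{\psi_i\}_{i\in I}=\rk\{\varphi_i\}_{i\in I}$ (and likewise for $I^c$). The heart of the proof is therefore the degenerate, rank-jumping case: one must pass to a maximal linearly independent subfamily of each block and control how the remaining dependent vectors constrain the perturbed complement, or else re-select the partition so that a usable witness persists. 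I expect this step to carry the bulk of the technical work and to be where the hypothesis $M<2N-1$ must be exploited quantitatively rather than merely as the dimension bound above.
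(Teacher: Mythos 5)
Your first two paragraphs are fine as far as they go: the reformulation via $u=x+y$, $v=x-y$ is correct, the witness characterization of failure (a partition $\{I,I^c\}$ with $u\in[\spn\{\varphi_i\}_{i\in I}]^\perp$, $v\in[\spn\{\varphi_i\}_{i\in I^c}]^\perp$, $\langle u,v\rangle\neq 0$) is the contrapositive of Theorem~\ref{Fa}, and in the rank-stable case the projected witnesses $u',v'$ do converge and close the argument. But the proof stops exactly where the theorem begins: the rank-jumping case, which you explicitly defer, is not a technical remainder but the entire content of the statement, and neither of your suggested fixes is carried out. To see concretely that your fixed-partition strategy fails, take $N=3$, $M=4<2N-1$, and the frame $\varphi_1=e_1$, $\varphi_2=e_1$, $\varphi_3=e_2$, $\varphi_4=e_2+e_3$, which fails norm retrieval with witness partition $I=\{1,2,3\}$, $u=e_3$, $v=e_2-e_3$. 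Perturb only $\psi_2=e_1+\delta e_3$: now $\spn\{\psi_i\}_{i\in I}=\mathbb{R}^3$, its orthogonal complement is $\{0\}$, and every witness supported on $\{I,I^c\}$ is annihilated, no matter how small $\delta$ is. The perturbed family does still fail norm retrieval, but only through different partitions (e.g.\ $J=\{2,3\}$ with $u'\propto(-\delta,0,1)$, $v'\propto(0,1,-1)$, where index $1$ has migrated across the partition). So the witnessing partition must be chosen as a function of the perturbation, and your sketch provides no mechanism, uniform over all $\psi$ in a ball, for making that choice; "pass to a maximal independent subfamily" does not by itself produce the new partition $J$ or the new pair $(u',v')$.

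Relatedly, the two hypotheses that make the theorem true are never used in any completed step of your argument. Your count $2N-M\geq 2$ bounds the \emph{sum} of the dimensions of the two complements but does not prevent one of them from collapsing to $\{0\}$ after perturbation when $|I|\geq N$, exactly as in the example above. And frameness is essential, not cosmetic: in $\mathbb{R}^2$ the non-frame pair $(0,e_2)$ fails norm retrieval with $M=2<2N-1$, yet the perturbations $(\tfrac1n e_1, e_2)$ are orthogonal bases and do norm retrieval, so the failing set is not open at non-frame points; any correct proof must invoke the spanning hypothesis somewhere, and yours never does. For what it is worth, the paper under review does not prove this statement either --- it quotes it from \cite{A} and uses it as a black box in the proof of Theorem~\ref{T9} --- so the standard to meet here is a self-contained argument; what you have is a correct reduction plus an honest identification of the open step, not a proof.
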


\begin{remark}
	The theorem fails for $M\geq2N-1$ since the full spark frames are dense in the set of all frames, and every full spark frame with $M\ge 2N-1$ elements does phase retrieval, and hence norm retrieval.
\end{remark}

We now extend this theorem to subspaces. To this end, we need some lemmas.

\begin{lemma}\label{L7}
	If $\|x\|=1$ and $\|y-x\|<\epsilon$, then
	\[\left \|\frac{y}{\|y\|}-x\right \|< 2\epsilon.
	\]
\end{lemma}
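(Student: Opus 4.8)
The plan is to estimate $y/\|y\|-x$ by inserting the intermediate vector $y$ and applying the triangle inequality together with the reverse triangle inequality. First I would rewrite the difference as
\[
\frac{y}{\|y\|} - x = \left(\frac{y}{\|y\|} - y\right) + (y - x) = \left(\frac{1}{\|y\|} - 1\right) y + (y - x),
\]
so that by the triangle inequality the factor $\|y\|$ cancels against $1/\|y\|$ in the first term:
\[
\left\|\frac{y}{\|y\|} - x\right\| \leq \left|\frac{1}{\|y\|} - 1\right| \|y\| + \|y - x\| = \bigl|1 - \|y\|\bigr| + \|y - x\|.
\]

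The second step is to control the first summand using the hypothesis $\|x\|=1$. By the reverse triangle inequality,
\[
\bigl|1 - \|y\|\bigr| = \bigl|\,\|x\| - \|y\|\,\bigr| \leq \|x - y\| < \epsilon,
\]
and since $\|y - x\| < \epsilon$ as well, adding the two bounds yields $\left\|\tfrac{y}{\|y\|} - x\right\| < 2\epsilon$, which is the assertion.

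The only point needing a brief remark is that the normalization $y/\|y\|$ must be meaningful, i.e. $y \neq 0$; this holds automatically in the intended regime of small $\epsilon$ (in particular for $\epsilon \leq 1$), since then $\|y\| \geq \|x\| - \|y - x\| > 0$. Beyond that, there is no real obstacle: the whole argument is a two-term triangle-inequality estimate, and the one idea worth isolating is the decomposition through $y$, which is precisely what makes the $\|y\|$ factor cancel and reduces everything to the reverse triangle inequality.
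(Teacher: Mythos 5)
Your proof is correct and follows essentially the same route as the paper's: decompose through the intermediate vector $y$ via the triangle inequality, reducing to $\bigl|1-\|y\|\bigr| + \|y-x\|$, and bound $\bigl|1-\|y\|\bigr| < \epsilon$ by the reverse triangle inequality (the paper's displayed estimate $1-\epsilon < \|y\| < 1+\epsilon$ is exactly this). Your added remark on well-definedness of $y/\|y\|$ for small $\epsilon$ is a minor point the paper leaves implicit.
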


\begin{proof}
	We compute:
	\[ 1-\epsilon < \|x\|-\|y-x\|\le \|y\|\le \|x\|+\|y-x\|<1+\epsilon.\]
	Now
	\begin{align*}
		\left \|\frac{y}{\|y\|}-x\right \|&\le \left \|\frac{y}{\|y\|}-y\right \|+\|y-x\|\\
		&<\frac{1}{\|y\|}
		\|y-(\|y\|)y\|+\epsilon=|1-\|y\||+\epsilon< \epsilon+\epsilon=2\epsilon,
	\end{align*} which is the claim.
\end{proof}

\begin{lemma}\label{GrS}
	Assume $\{x_i\}_{i=1}^p$ is an orthonormal basis for a subspace $W$ of $\mathbb{R}^N$. Given $\epsilon>0$, there exists a
	$\delta>0$ such that if $\{y_i\}_{i=1}^p$ is a set of vectors satisfying $\|y_i-x_i\|<\delta$ for all $i\in [p]$, then there exists an orthonormal basis $\{z_i\}_{i=1}^p$ for $\spn\{y_i\}_{i=1}^p$
	with
	\[ \|z_i-x_i\|< \epsilon, \mbox{ for all } i\in [p].\]
\end{lemma}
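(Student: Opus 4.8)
The plan is to run the Gram--Schmidt process on $\{y_i\}_{i=1}^p$ and to show that, when each $y_i$ is sufficiently close to $x_i$, the resulting orthonormal system stays close to $\{x_i\}_{i=1}^p$. The key observation is that Gram--Schmidt applied to the \emph{already orthonormal} family $\{x_i\}_{i=1}^p$ returns $\{x_i\}_{i=1}^p$ unchanged, since all the correction coefficients $\langle x_k, x_j\rangle$ with $j<k$ vanish. Thus the argument amounts to a stability (continuity) statement for Gram--Schmidt around a fixed point.

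First I would fix $\delta$ small enough that $\{y_i\}_{i=1}^p$ is linearly independent; this is possible because linear independence is an open condition and $\{x_i\}_{i=1}^p$ is orthonormal, hence independent. For such $\delta$ the space $\spn\{y_i\}_{i=1}^p$ is $p$-dimensional, and the Gram--Schmidt vectors $z_1,\dots,z_p$ are well defined by $w_1=y_1$, $z_1 = w_1/\|w_1\|$, and for $k\ge 2$, $w_k = y_k - \sum_{j<k}\langle y_k, z_j\rangle z_j$ and $z_k = w_k/\|w_k\|$.

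The heart of the proof is an induction on $k$ showing that $\|z_k - x_k\|$ is small. For the base case, $\|y_1 - x_1\| < \delta$ together with Lemma \ref{L7} gives $\|z_1 - x_1\| < 2\delta$. For the inductive step, assume each $z_j$ with $j<k$ is within a small bound of $x_j$. Writing $w_k - x_k = (y_k - x_k) - \sum_{j<k}\langle y_k, z_j\rangle z_j$ and using that $\langle x_k, x_j\rangle = 0$, each coefficient $\langle y_k, z_j\rangle$ is close to $0$ (because $y_k$ is close to $x_k$ and $z_j$ is close to $x_j$), so the correction term is small and $w_k$ is close to $x_k$. In particular $\|w_k\|$ is close to $\|x_k\|=1$, and a second application of Lemma \ref{L7} yields $z_k$ close to $x_k$.

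The main obstacle is bookkeeping the accumulation of errors: the bound on $\|z_k - x_k\|$ depends on the bounds from the previous steps, so the error grows from step to step by a factor involving the inner products and the normalizing factors $\|w_j\|^{-1}$. Since $p$ is a fixed finite number, this growth is a fixed finite amplification; I would record an explicit constant $C_p$ depending only on $p$ with $\|z_k - x_k\| \le C_p\,\delta$ for all $k$, and then choose $\delta < \epsilon/C_p$ at the end. Alternatively, one may phrase the whole argument as the continuity of the Gram--Schmidt map on the open set of linearly independent $p$-tuples, evaluated at the fixed point $(x_1,\dots,x_p)$, which gives the same conclusion without tracking constants explicitly.
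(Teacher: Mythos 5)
Your proposal is correct and follows essentially the same route as the paper: run Gram--Schmidt on $\{y_i\}_{i=1}^p$, control the correction coefficients $\langle y_k, z_j\rangle$ by expanding them against the orthonormality relations $\langle x_k, x_j\rangle = 0$ (the paper writes exactly the decomposition $\langle y_k - x_k, z_j\rangle + \langle z_j - x_j, x_k\rangle + \langle x_j, x_k\rangle$), invoke Lemma \ref{L7} for the normalization step, and close the induction with a constant $C\delta$ bound depending only on $p$. Your explicit choice $\delta < \epsilon/C_p$ and the alternative continuity-of-Gram--Schmidt remark are both consistent with, and if anything slightly more carefully stated than, the paper's own argument.
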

\begin{proof} We can choose $\delta>0$ small enough so that $\{y_i\}_{i=1}^p$ is linearly independent. Let  $\{u_i\}_{i=1}^p$ be the orthogonal set of vectors obtained by applying the Gram-Schmidt orthogonalization process to $\{y_i\}_{i=1}^p$ and let $z_i=\dfrac{u_i}{\|u_i\|}, i\in [p]$. Then, 
	\[u_1=y_1 \mbox{ and } u_k = y_k-\sum_{i=1}^{k-1}\langle y_k, z_i\rangle z_i, \mbox{ for } k = 2, \ldots, p.\] 
	It follows that,
	for all $k\geq2$, we have			
	\begin{align*}
		\|u_k-y_k\|&=\|\sum_{i=1}^{k-1}\langle y_k, z_i\rangle z_i\|\\
		&\leq \sum_{i=1}^{k-1}|\langle y_k, z_i\rangle|\\
		&= \sum_{i=1}^{k-1}|\langle y_k - x_k, z_i\rangle +\langle z_i-x_i, x_k\rangle +\langle x_i, x_k\rangle |\\
		&\leq (k-1)\|y_k-x_k\|+\sum_{i=1}^{k-1}\|z_i-x_i\|.
	\end{align*}
	Therefore, for all $k\geq2$, we have
	\[\|u_k-x_k\|\leq \|u_k-y_k\|+\|y_k-x_k\|\leq k\|y_k-x_k\|+\sum_{i=1}^{k-1}\|z_i-x_i\|.\]
	Moreover, by Lemma \ref{L7}, if $\|y_1-x_1\|<\delta$, then $\|z_1-x_1\|<2\delta$. It follows that there exists a constant $C>0$ such that 
	\[\|z_i-x_i\|< C\delta, \mbox{ for all } i\in [p].\] The conclusion then follows.
\end{proof}
\begin{lemma}\label{lem_dim} Let $P$ and $Q$ be orthogonal projections on $\mathbb{R}^N$. If $\|P-Q\|<1$, then $\dim(P(\mathbb{R}^N))=\dim(Q(\mathbb{R}^N))$.
\end{lemma}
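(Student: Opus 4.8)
The plan is to establish the two inequalities $\dim(Q(\mathbb{R}^N)) \le \dim(P(\mathbb{R}^N))$ and $\dim(P(\mathbb{R}^N)) \le \dim(Q(\mathbb{R}^N))$ separately, each by showing that one of the projections restricts to an \emph{injective} linear map on the range of the other. The engine of the argument is the elementary observation that a vector lying in the range of a projection is fixed by that projection, so that the hypothesis $\|P-Q\|<1$ forces the other projection to move it by strictly less than its own length.

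First I would take an arbitrary nonzero $x\in Q(\mathbb{R}^N)$, so that $Qx=x$, and estimate
\[
\|x-Px\|=\|Qx-Px\|=\|(Q-P)x\|\le \|P-Q\|\,\|x\|<\|x\|.
\]
If $Px$ were equal to $0$, this inequality would read $\|x\|<\|x\|$, an absurdity; hence $Px\neq 0$. Consequently the linear map $P|_{Q(\mathbb{R}^N)}\colon Q(\mathbb{R}^N)\to P(\mathbb{R}^N)$ has trivial kernel and is therefore injective, which yields $\dim(Q(\mathbb{R}^N))\le \dim(P(\mathbb{R}^N))$. Since $\|P-Q\|=\|Q-P\|$, the very same argument with the roles of $P$ and $Q$ interchanged shows that $Q|_{P(\mathbb{R}^N)}$ is injective, giving the reverse inequality $\dim(P(\mathbb{R}^N))\le \dim(Q(\mathbb{R}^N))$. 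Combining the two inequalities produces the claimed equality of dimensions.

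I do not expect a serious obstacle here, as this is a clean classical fact about orthogonal projections. The only point that genuinely requires care is that the \emph{strict} inequality $\|P-Q\|<1$ is exactly what rules out $Px=0$; at the threshold $\|P-Q\|=1$ a unit vector in one range can be annihilated by the other projection, and the ranks may then differ, so strictness cannot be dropped. Once this is noted, the symmetry between $P$ and $Q$ finishes the proof, and no continuity, homotopy, or trace-based argument is needed.
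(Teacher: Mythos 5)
Your proof is correct and is essentially the paper's argument: the paper shows that $Q$ maps an orthonormal basis of $P(\mathbb{R}^N)$ to a linearly independent set via the same triangle-inequality estimate, which is exactly your statement that the restriction $Q|_{P(\mathbb{R}^N)}$ (equivalently $P|_{Q(\mathbb{R}^N)}$) has trivial kernel. Your phrasing via injectivity of the restricted map is marginally more direct, but the key estimate $\|(Q-P)x\| \le \|P-Q\|\,\|x\| < \|x\|$ for $x$ fixed by one projection is identical in both.
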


\begin{proof}
	Let $\{u_i\}_{i=1}^p$ be an orthonormal basis for $P(\mathbb{R}^N)$. We claim that $\{Qu_i\}_{i=1}^p$
	is linearly independent. If not, there exist scalars $\{a_i\}_{i=1}^p$, not all zero, such that
	$\sum_{i=1}^p a_iQu_i=0.$
	Then
	\begin{align*}
		\|\sum_{i=1}^pa_iu_i\|&\le \|\sum_{i=1}^pa_iQu_i\|+\|\sum_{i=1}^pa_iPu_i-\sum_{i=1}^pa_iQu_i\|\\
		&\leq \|P-Q\|\|\sum_{i=1}^pa_iu_i\|<\|\sum_{i=1}^pa_iu_i\|,
	\end{align*}
	which is a contradiction.
	
	It follows that $\dim(Q(\mathbb{R}^N))\geq\dim(P(\mathbb{R}^N))$. Reversing the roles of $P$ and $Q$ proves $\dim(P(\mathbb{R}^N))=\dim(Q(\mathbb{R}^N))$.
\end{proof}
\begin{theorem}\label{T9}
	Let \(\{W_i\}_{i=1}^M\) be subspaces of \(\mathbb{R}^N\) with corresponding orthogonal projections \(\{P_i\}_{i=1}^M\), and suppose that 
	$\sum_{i=1}^M \dim(W_i) < 2N - 1.$
	If \(\{P_i\}_{i=1}^M\) fails norm retrieval, then there exists \(\delta > 0\) such that for any set of orthogonal projections \(\{Q_i\}_{i=1}^M\) satisfying \(\|Q_i - P_i\| < \delta\) for all \(i \in [M]\), the set \(\{Q_i\}_{i=1}^M\) also fails norm retrieval.
\end{theorem}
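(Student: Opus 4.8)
The plan is to reduce the statement to its vector analogue, the norm-retrieval perturbation theorem of \cite{A}, by means of the projection-to-vector characterization Theorem~\ref{T11}, and to convert a perturbation of the projections into a perturbation of spanning vectors via the stability Lemmas~\ref{lem_dim} and~\ref{GrS}. Since $\{P_i\}_{i=1}^M$ fails norm retrieval, the equivalence $(1)\Leftrightarrow(3)$ of Theorem~\ref{T11} lets me fix, for each $i$, an orthonormal basis $\{\varphi_{ij}\}_{j=1}^{N_i}$ of $W_i$ whose union $\Phi=\{\varphi_{ij}\}$ fails norm retrieval as a family of vectors; note that $|\Phi|=\sum_i\dim(W_i)<2N-1$.

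Next I would transfer a perturbation $\{Q_i\}$ of the projections to $\Phi$. Fix $\delta<1$. By Lemma~\ref{lem_dim}, $\|Q_i-P_i\|<\delta$ forces $\dim(Q_i(\mathbb{R}^N))=\dim(W_i)=N_i$. Since $\varphi_{ij}\in W_i$ we have $P_i\varphi_{ij}=\varphi_{ij}$, so the vectors $y_{ij}:=Q_i\varphi_{ij}$ lie in $Q_i(\mathbb{R}^N)$ and satisfy $\|y_{ij}-\varphi_{ij}\|=\|(Q_i-P_i)\varphi_{ij}\|\le\|Q_i-P_i\|<\delta$. For $\delta$ small the $\{y_{ij}\}_j$ are linearly independent, hence span the $N_i$-dimensional space $Q_i(\mathbb{R}^N)$; applying Lemma~\ref{GrS} to the orthonormal basis $\{\varphi_{ij}\}_j$ and the perturbation $\{y_{ij}\}_j$ then produces an orthonormal basis $\{\psi_{ij}\}_j$ of $Q_i(\mathbb{R}^N)$ with $\|\psi_{ij}-\varphi_{ij}\|$ as small as desired. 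Choosing $\delta$ small enough, the total perturbation $\sum_{i,j}\|\psi_{ij}-\varphi_{ij}\|$ is made smaller than the $\epsilon$ supplied by \cite{A} for $\Phi$.

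When the subspaces $\{W_i\}$ jointly span $\mathbb{R}^N$, the family $\Phi$ is a frame, so the theorem of \cite{A} applies and any $\Psi=\{\psi_{ij}\}$ within $\epsilon$ of $\Phi$ fails norm retrieval. Since $\Psi$ is a union of orthonormal bases of the ranges $Q_i(\mathbb{R}^N)$, the equivalence $(1)\Leftrightarrow(3)$ of Theorem~\ref{T11}, now applied to $\{Q_i\}$, yields that $\{Q_i\}$ fails norm retrieval, as required.

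The main obstacle is precisely the frame hypothesis in \cite{A}: if the $W_i$ do not jointly span $\mathbb{R}^N$, then $\Phi$ is not a frame and the cited result cannot be invoked. When $\sum_i\dim(W_i)<N$ this is harmless, since Lemma~\ref{lem_dim} forces $\sum_i\dim(Q_i(\mathbb{R}^N))<N$, so the ranges cannot span $\mathbb{R}^N$ and $\{Q_i\}$ fails norm retrieval trivially. The genuinely delicate case is when the $W_i$ are redundant but still fail to span (for instance, two coincident planes in $\mathbb{R}^3$), because a small perturbation may enlarge the joint span to all of $\mathbb{R}^N$. Here I would instead seek a perturbation-stable witness of failure: by Theorem~\ref{thm_PD}(3) there is $x\neq 0$ with $x\notin\spn\{P_ix\}_{i=1}^M$, and one tries to choose $x$ so that $x\notin\spn\{Q_ix\}_{i=1}^M$ persists for small $\|Q_i-P_i\|$, the difficulty being that $\spn\{Q_ix\}$ may enlarge and come to contain $x$. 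Establishing that such a stable witness can always be found is the crux of this remaining case.
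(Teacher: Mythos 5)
Your main argument is, step for step, the paper's own proof: use Theorem~\ref{T11} to pick for each $i$ an orthonormal basis $\{\varphi_{ij}\}_{j=1}^{N_i}$ of $W_i$ whose union $\Phi$ fails norm retrieval, take the $\epsilon$ from the perturbation theorem of \cite{A}, use Lemma~\ref{lem_dim} to preserve the ranks and Lemma~\ref{GrS} applied to $y_{ij}=Q_i\varphi_{ij}$ to produce orthonormal bases $\{\psi_{ij}\}$ of $Q_i(\mathbb{R}^N)$ within $\epsilon$ of $\Phi$, then conclude via Theorem~\ref{T11} again. Where you diverge is in noticing that the cited theorem of \cite{A} is stated for \emph{frames}, whereas $\Phi$ need not span $\mathbb{R}^N$; the paper invokes that theorem without checking this hypothesis. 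Your disposal of the subcase $\sum_i\dim W_i<N$ (the perturbed ranges cannot span, by Lemma~\ref{lem_dim}, so $\{Q_i\}$ fails trivially) is correct.

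The case you leave open --- $\Phi$ redundant but non-spanning --- is not a gap you could have closed: the statement is actually \emph{false} there, so the perturbation-stable witness you were seeking does not exist in general. Take $N=3$, $W_1=W_2=W_3=\spn\{e_1\}$, $W_4=\spn\{e_2\}$, so that $\sum_i\dim W_i=4<5=2N-1$ and $\{P_i\}_{i=1}^4$ fails norm retrieval (all $P_ie_3=0$). For small $0<\alpha<\beta$, let $Q_i$ be the projections onto the lines spanned by $\psi_1=e_1$, $\psi_2=\cos\alpha\, e_1+\sin\alpha\, e_3$, $\psi_3=\cos\beta\, e_1+\sin\beta\, e_3$, $\psi_4=e_2$; then $\|Q_i-P_i\|\le\sin\beta$ is arbitrarily small. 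But $\psi_1,\psi_2,\psi_3$ are three full spark vectors in the plane $\spn\{e_1,e_3\}$, hence do phase retrieval there (since $3=2\cdot2-1$), so the data $|\langle x,\psi_i\rangle|$, $i\in[4]$, determine $(x_1,x_3)$ up to a global sign and $|x_2|$, hence $\|x\|$: the perturbed family $\{Q_i\}$ \emph{does} norm retrieval. So no $\delta$ as in Theorem~\ref{T9} exists for this $\{P_i\}$, and the theorem (and the paper's proof of it) requires the additional hypothesis that the $W_i$ jointly span $\mathbb{R}^N$, i.e.\ that $\Phi$ is a frame as in \cite{A}. With that hypothesis added, your main case is a complete proof identical to the paper's; your instinct that the remaining case is ``the crux'' was exactly right --- it is precisely where the statement breaks down.
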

\begin{proof}
	By Theorem \ref{T11}, for each $i\in [M]$, there exists an orthonormal basis $\{\varphi_{ij}\}_{j=1}^{N_i}$ for $W_i$ such that the combined collection $\{\varphi_{ij}\}_{i=1, j=1}^{M,\ N_i}$ fails norm retrieval.
	Let $\epsilon>0$ be such that whenever $\{\psi_{ij}\}_{i=1, j=1}^{M,\ N_i}$ are vectors satisfying  $\|\psi_{ij}-\varphi_{ij}\|<\epsilon$, then $\{\psi_{ij}\}_{i=1, j=1}^{M,\ N_i}$ fails norm retrieval. Let $\delta>0$ be as in Lemma \ref{GrS}, and
	assume that $\{Q_i\}_{i=1}^M$ is a family of orthogonal projections on $\mathbb{R}^N$ such that $\|Q_i-P_i\|<\delta$ for all $i\in [M]$. Note that by Lemma \ref{lem_dim}, $\dim(Q_i(\mathbb{R}^N))=\dim(P_i(\mathbb{R}^N))$ for all $i\in [M]$. Moreover,
	\[ \|Q_i\varphi_{ij}-\varphi_{ij}\|=\|Q_i\varphi_{ij}-P_i\varphi_{ij}\|<\delta,\mbox{ for all } i, j.\] By Lemma \ref{GrS}, for each $i\in [M]$, there exists an orthonormal basis $\{\psi_{ij}\}_{j=1}^{N_i}$ for $Q_i(\mathbb{R}^N)$ with $\|\psi_{ij}-\varphi_{ij}\|<\epsilon$. Therefore, $\{\psi_{ij}\}_{i=1, j=1}^{M,\ N_i}$ fails norm retrieval and so $\{Q_i\}_{i=1}^M$ also fails norm retrieval.
	\end{proof}

\vspace{0.1in}
{\bf Acknowledgments}
\vspace{0.1in}

The first author was supported by Hue University under the
grant number DHH2025-03-205.

\end{document}